\numberwithin{equation}{section}
\newtheorem{theorem}{Theorem}[section]
\newtheorem{lemma}[theorem]{Lemma}
\newtheorem{proposition}[theorem]{Proposition}
\newtheorem{corollary}[theorem]{Corollary}
\newtheorem{conjecture}[theorem]{Conjecture}
\theoremstyle{definition}
\newtheorem{definition}[theorem]{Definition}
\newtheorem{example}[theorem]{Example}
\begin{document}


\title[Vanishing ideals]{Vanishing ideals over finite fields} 

\author{Azucena Tochimani}
\address{
Departamento de
Matem\'aticas\\
Centro de Investigaci\'on y de Estudios
Avanzados del
IPN\\
Apartado Postal
14--740 \\
07000 Mexico City, D.F.
}
\email{tochimani@math.cinvestav.mx}
\thanks{The first author was partially supported by CONACyT. The
second author was partially supported by SNI}

\author{Rafael H. Villarreal}
\address{
Departamento de
Matem\'aticas\\
Centro de Investigaci\'on y de Estudios
Avanzados del
IPN\\
Apartado Postal
14--740 \\
07000 Mexico City, D.F.
}
\email{vila@math.cinvestav.mx}


\subjclass[2000]{Primary 13P25; Secondary 14G50, 11T71, 94B27.} 

\begin{abstract} 
Let $\mathbb{F}_q$ be a finite field, let $\mathbb{X}$ be a subset
of a projective space ${\mathbb P}^{s-1}$, over the 
field $\mathbb{F}_q$, parameterized by rational functions, and let 
$I(\mathbb{X})$ be the vanishing ideal of $\mathbb{X}$. The main
result of this paper is a formula for $I(\mathbb{X})$ that will
allows us to compute: (i) the algebraic invariants of
$I(\mathbb{X})$, and (ii) the basic 
parameters of the corresponding Reed-Muller-type code.
\end{abstract}

\maketitle

\section{Introduction}\label{section-intro}

In this paper we study vanishing ideals of sets in 
projective spaces parameterized by rational
functions over finite fields. 

Let $R=K[\mathbf{y}]=K[y_1,\ldots,y_n]$ be a polynomial ring over 
a finite field $K=\mathbb{F}_q$ and
let $F$  be a finite set $\{f_1/g_1,\ldots,f_s/g_s\}$ of rational
functions in $K(\mathbf{y})$, the quotient field of $R$, where $f_i$ (resp. $g_i$)
is in $R$ (resp. $R\setminus\{0\}$) for all $i$. As
usual we denote the affine and projective spaces over the field $K$ by
$\mathbb{A}^s$ and $\mathbb{P}^{s-1}$, respectively. Points of the
projective space ${\mathbb P}^{s-1}$ are denoted by $[\alpha]$, where $0\neq \alpha\in
K^s$. The {\it projective set parameterized}
$F$, denoted by $\mathbb{X}$, is the set of all points
$$[(f_1(x)/g_1(x),\ldots,f_s(x)/g_s(x))]$$ 
in $\mathbb{P}^{s-1}$ that 
are well defined, i.e., $x\in K^n$, $f_i(x)\neq 0$  for some $i$, and
$g_i(x)\neq 0$ for all $i$.

Let $S=K[t_1,\ldots,t_s]=\oplus_{d=0}^\infty S_d$ be a polynomial ring 
over the field $K$ with the standard grading. The graded ideal 
$I(\mathbb{X})$ generated by the  
homogeneous polynomials of $S$ that vanish at all points of
$\mathbb{X}$
 is called the {\it vanishing ideal\/} of $\mathbb{X}$. 

There are good reasons to study vanishing ideals over 
finite fields. They are used in 
algebraic geometry \cite{harris} and
algebraic coding theory \cite{GRT}. They are also used in 
polynomial interpolation problems \cite{sauer}.

We come to our main result.

\noindent {\bf Theorem}~\ref{puebla-cinvestav-2-rat-finite}{\it\  
Let $B=K[y_0,y_1,\ldots,y_n,z,t_1,\ldots,t_s]$ be a polynomial ring
over $K=\mathbb{F}_q$. If $\mathbb{X}$ is a projective set  
parameterized by rational functions $f_1/g_1,\ldots,f_s/g_s$ in
$K(\mathbf{y})$, then 
$$
I(\mathbb{X})=(\{g_it_i-f_iz\}_{i=1}^s,\{y_i^q-y_i\}_{i=1}^n,
y_0g_1\cdots g_s-1)\cap S.
$$}
\quad Using the computer algebra system {\it
Macaulay\/}$2$ \cite{mac2}, this result can be used to compute the degree,
regularity, and Hilbert polynomial of $I(\mathbb{X})$ (see
Example~\ref{feb7-15}).  

By the algebraic methods
introduced in \cite{algcodes} (see Section~\ref{prelim-section}), this
result can also be used to 
compute the basic parameters (length, dimension, minimum distance) of
the corresponding projective Reed-Muller-type code over $\mathbb{X}$ (see Example~\ref{feb12-15}).  

For all unexplained
terminology and additional information,  we refer to
\cite{CLO,harris,cocoa-book} (for algebraic geometry and 
computational commutative algebra) and 
\cite{tsfasman} (for coding theory).

\section{Preliminaries}\label{prelim-section}
All results of this
section are well-known. To avoid repetitions, we continue to employ
the notations and 
definitions used in Section~\ref{section-intro}.

If $d\in\mathbb{N}$, let
$S_d$ denote the set of homogeneous polynomials of
degree $d$ in $S$, together with the zero
polynomial. Thus $S_d$ is a $K$-linear space and 
$S=\oplus_{d=0}^\infty S_d$. 

\begin{definition}\rm An ideal $I\subset S$ is {\it graded\/} if $I$
is generated 
by homogeneous polynomials.
\end{definition}

\begin{proposition}{\rm\cite[p.~92]{Mats}}\label{jan27-15}
Let $I\subset S$ be an ideal. The 
following conditions are equivalent\/{\rm:}
\begin{itemize}
\item[$(\mathrm{g}_1)$] $I$ is a graded ideal. 
\item[$(\mathrm{g}_2)$] If $f=\sum_{d=0}^rf_d$ is in $I$, $f_d\in
S_d$ for $d=0,\ldots,r$, then each $f_d$ is in $I$.
\end{itemize}
\end{proposition}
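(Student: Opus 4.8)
The plan is to prove the two implications separately, relying only on the fact that $S=\oplus_{d=0}^\infty S_d$ is a \emph{direct} sum, so that every $f\in S$ has a unique expression $f=\sum_{d}f_d$ with $f_d\in S_d$ and all but finitely many $f_d$ equal to zero; these $f_d$ are the homogeneous components of $f$. Uniqueness of this decomposition is what allows one to identify homogeneous pieces unambiguously, and it will be used tacitly throughout.

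For $(\mathrm{g}_2)\Rightarrow(\mathrm{g}_1)$, I would let $H$ denote the set of all homogeneous components of all elements of $I$. By hypothesis $(\mathrm{g}_2)$ each such component lies in $I$, so $H\subset I$ and hence $(H)\subset I$. Conversely, every $f\in I$ is the sum of its homogeneous components, each of which lies in $H$, so $f\in(H)$. Therefore $I=(H)$ is generated by homogeneous polynomials, which is exactly $(\mathrm{g}_1)$.

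For $(\mathrm{g}_1)\Rightarrow(\mathrm{g}_2)$, write $I=(h_1,\ldots,h_m)$ with each $h_j$ homogeneous, say of degree $e_j$. Given $f\in I$, fix an expression $f=\sum_{j=1}^m a_jh_j$ with $a_j\in S$, and decompose each coefficient into its homogeneous components $a_j=\sum_k a_{j,k}$ with $a_{j,k}\in S_k$. Each product $a_{j,k}h_j$ is then homogeneous of degree $k+e_j$ and lies in $I$, since $h_j\in I$. Grouping these products by total degree, the degree-$d$ component of $f$ is $f_d=\sum_{k+e_j=d}a_{j,k}h_j$, a finite sum of elements of $I$; hence $f_d\in I$ for every $d$, which is precisely $(\mathrm{g}_2)$.

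The argument is routine, and I expect no genuine obstacle. The only point requiring care is the bookkeeping in the second implication: one must collect the products $a_{j,k}h_j$ according to their total degree $k+e_j$ and invoke the uniqueness of the homogeneous decomposition to identify this grouped sum with the component $f_d$ of $f$, rather than with some other element of $I$ of degree $d$.
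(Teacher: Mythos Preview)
Your argument is correct and is the standard proof of this elementary fact. Note that the paper does not supply its own proof of this proposition: it is quoted as a known result from \cite[p.~92]{Mats}, so there is no ``paper's proof'' to compare against. Your write-up matches what one finds in Matsumura; the only cosmetic point is that you assume a finite homogeneous generating set $h_1,\ldots,h_m$, which is justified here since $S=K[t_1,\ldots,t_s]$ is Noetherian, though the same reasoning goes through verbatim for an arbitrary homogeneous generating set because any $f\in I$ is a finite $S$-linear combination of generators.
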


Let $I$ be a graded ideal of $S$ of
dimension $k$. As usual, by the {\it dimension\/} of $I$ 
we mean the Krull dimension of $S/I$.
 The {\it Hilbert
function\/} of $S/I$ is the function $H_I\colon\mathbb{N}\rightarrow\mathbb{N}$ given by
$$
H_I(d)=\dim_K(S_d/I_d),
$$
where $I_d=I\cap S_d$. There is a unique
polynomial $h_I(x)\in\mathbb{Q}[x]$ 
of degree $k-1$ such that $h_I(d)=H_I(d)$ for 
$d\gg 0$ \cite[p.~330]{singular-book}. By convention, the zero polynomial 
has degree $-1$. 

The {\it degree\/} or {\it multiplicity\/} of $S/I$ is the 
positive integer 
$$
\deg(S/I):=\left\{\begin{array}{ll}(k-1)!\,\displaystyle \lim_{d\rightarrow\infty}{H_I(d)}/{d^{k-1}}
&\mbox{if }k\geq 1,\\
\dim_K(S/I) &\mbox{if\ }k=0.
\end{array}\right.
$$ 

\begin{definition}\label{definition:index-of-regularity}
The {\it regularity of the Hilbert function\/} of $S/I$, or simply the
\emph{regularity} of $S/I$, denoted 
${\rm reg}(S/I)$, is the least integer $r\geq 0$ such that
$H_I(d)$ is equal to $h_I(d)$ for $d\geq r$.  
\end{definition}

We will use the following multi-index notation: for
$a=(a_1,\ldots,a_s)\in\mathbb{Z}^s$, set $t^a=t_1^{a_1}\cdots
t_s^{a_s}$. We call $t^a$ a {\it Laurent monomial\/}. 
If $a_i\geq 0$ for all $i$,
$t^a$ is a {\it monomial\/} of $S$. An ideal of $S$ generated 
by polynomials of the form $t^a-t^b$, with $a,b$ in $\mathbb{N}^s$, 
is called a {\it
binomial ideal\/} of $S$.

\begin{lemma}{\cite[p.~321]{monalg-rev}}\label{may1-1-11}
Let $B=K[y_1,\ldots,y_n,t_1,\ldots ,t_s]$ be a polynomial ring over a
field $K$. If  $I$ is a binomial ideal of $B$, then the reduced Gr\"obner basis of
 $I$ with respect to any term order consists of binomials and 
$I\cap K[t_1,\ldots,t_s]$ is a binomial ideal of $S$.
\end{lemma}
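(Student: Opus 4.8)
The plan is to prove both assertions with Buchberger's algorithm, exploiting the fact that the class of binomials $t^a-t^b$ (here in the variables $y_1,\dots,y_n,t_1,\dots,t_s$ of $B$) is closed under the two operations that drive the algorithm: forming $S$-polynomials and taking remainders. Fix a term order $\prec$ on $B$, and write each generator of $I$ as a binomial with distinguished leading monomial. The whole argument then reduces to checking that this two-term structure is never destroyed.

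First I would record the two closure properties. Let $f=m_1-m_2$ and $g=n_1-n_2$ be binomials with leading monomials $m_1$ and $n_1$, and put $L=\mathrm{lcm}(m_1,n_1)$. A direct computation gives $S(f,g)=(L/m_1)f-(L/n_1)g=(L/n_1)n_2-(L/m_1)m_2$, a difference of two monomials, hence again a binomial (or $0$, if these two monomials coincide). Next, if $h=u-v$ is a binomial and we reduce it by a binomial $p_1-p_2$ whose leading monomial $p_1$ divides one of the monomials of $h$, say $u=p_1w$, then the one-step reduction replaces $h$ by $p_2w-v$, again a binomial (or $0$, if $p_2w=v$). Thus no reduction step can create a polynomial with three or more terms, and since reducing a lone monomial always produces another nonzero monomial, a single monomial never appears in isolation.

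With these observations the first assertion follows. Starting Buchberger's algorithm from the binomial generators of $I$, every $S$-polynomial is a binomial and every remainder computed against the current binomial basis is again a binomial, so the algorithm terminates with a Gröbner basis consisting of binomials. Passing to the reduced Gröbner basis only involves discarding redundant generators and inter-reducing the survivors—operations that, by the same one-step-reduction argument, preserve the binomial form—together with rescaling each leading coefficient to $1$, which keeps the coefficients equal to $\pm 1$. Hence the reduced Gröbner basis of $I$ for $\prec$ consists of binomials.

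For the second assertion I would take $\prec$ to be an elimination order for $y_1,\dots,y_n$ (for instance a block order with each $y_i$ larger than every $t_j$). By the first part, the reduced Gröbner basis $G$ of $I$ for this order consists of binomials, and by the Elimination Theorem $G\cap S$ is a Gröbner basis of $I\cap S$, where $S=K[t_1,\dots,t_s]$. Each element of $G$ lying in $S$ is a binomial $u-v$ both of whose monomials involve only the $t$-variables, i.e.\ it has the form $t^a-t^b$ with $a,b\in\mathbb{N}^s$; therefore $I\cap S$ is generated by such binomials and is a binomial ideal of $S$. The one point requiring genuine care is the closure claim of the second paragraph—verifying that $S$-polynomials and remainders never break the two-term structure—since everything else is a formal consequence of Buchberger's algorithm and elimination theory.
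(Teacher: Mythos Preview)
Your argument is correct and is the standard proof of this fact. The key closure observations---that the $S$-polynomial of two pure differences $m_1-m_2$ and $n_1-n_2$ equals $(L/n_1)n_2-(L/m_1)m_2$, and that a one-step reduction of $u-v$ by $p_1-p_2$ produces $p_2w-v$---are computed correctly, and they do propagate through Buchberger's algorithm and the passage to the reduced basis. The elimination step is also handled properly: with a $y\succ t$ elimination order, a binomial in the reduced Gr\"obner basis lies in $S$ precisely when neither of its two monomials involves a $y$-variable, so $G\cap S$ consists of binomials $t^a-t^b$ and generates $I\cap S$ by the Elimination Theorem.

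As for comparison: the paper does not supply its own proof of this lemma. It is quoted from \cite[p.~321]{monalg-rev} as a known result, so there is nothing in the paper to set your argument against. Your approach is exactly the one found in that reference (and in Eisenbud--Sturmfels and similar treatments), so you have essentially reproduced the intended proof. One minor remark: your aside that ``a single monomial never appears in isolation'' is true but slightly beside the point---throughout the process every nonzero polynomial produced is a genuine two-term difference, so the case of a lone monomial simply does not arise.
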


\begin{proposition}{\rm\cite[pp.~136--137]{JacI}}\label{oct14-14} 
Let $K=\mathbb{F}_q$ be a finite field and let $\mathbb{A}^s$ be the
affine space of dimension $s$ over $K$. Then $I(\mathbb{A}^s)=
(\{t_i^q-t_i\}_{i=1}^s)$.
\end{proposition}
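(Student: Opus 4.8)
The plan is to prove the two inclusions separately, the containment $(\{t_i^q-t_i\}_{i=1}^s)\subseteq I(\mathbb{A}^s)$ being the easy direction. Since the multiplicative group $K^*=\mathbb{F}_q^*$ has order $q-1$, every $a\in K$ satisfies $a^q=a$; hence each generator $t_i^q-t_i$ vanishes at every point of $\mathbb{A}^s=K^s$, and therefore so does the whole ideal it generates. This gives one inclusion immediately.

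For the reverse inclusion $I(\mathbb{A}^s)\subseteq(\{t_i^q-t_i\}_{i=1}^s)$, I would argue by a division (normal form) reduction. Fix any $f\in I(\mathbb{A}^s)$. Each $t_i^q-t_i$ is monic of degree $q$ in the variable $t_i$, so by successively dividing $f$ by $t_1^q-t_1,\ldots,t_s^q-t_s$ I can write $f=\sum_{i=1}^s h_i(t_i^q-t_i)+r$ with $h_i\in S$ and $\deg_{t_i}(r)\le q-1$ for every $i$. Because $f$ vanishes on $\mathbb{A}^s$ and each $t_i^q-t_i$ vanishes on $\mathbb{A}^s$, the remainder $r$ also vanishes at every point of $K^s$. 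Thus it suffices to show that a polynomial of degree at most $q-1$ in each variable that vanishes on all of $K^s$ must be the zero polynomial; granting this, $r=0$ and $f\in(\{t_i^q-t_i\}_{i=1}^s)$.

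The substantive step is therefore this vanishing claim, which I would prove by induction on $s$. For $s=1$ a nonzero polynomial of degree at most $q-1$ has at most $q-1$ roots in $K$, yet it would have to vanish at all $q$ elements of $K$, a contradiction. For the inductive step I would write $r=\sum_{j=0}^{q-1}r_j(t_1,\ldots,t_{s-1})\,t_s^j$ and fix an arbitrary point $(a_1,\ldots,a_{s-1})\in K^{s-1}$; the univariate polynomial $\sum_{j=0}^{q-1}r_j(a_1,\ldots,a_{s-1})\,t_s^j$ has degree at most $q-1$ and vanishes at all $q$ values of $t_s$, so by the base case each coefficient $r_j(a_1,\ldots,a_{s-1})$ is zero. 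As the point was arbitrary, every $r_j$ vanishes on $K^{s-1}$ while having degree at most $q-1$ in each of its variables, so by the induction hypothesis $r_j=0$ for all $j$; hence $r=0$.

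The only delicate point to watch is the legitimacy of the simultaneous division producing a remainder with $\deg_{t_i}(r)\le q-1$ for all $i$ at once: here it is important that each $t_i^q-t_i$ is monic in its own variable, so reducing modulo one generator never raises the $t_j$-degree for $j\ne i$, and the reductions can be carried out variable by variable. With that observed, the remaining work is routine, and the heart of the proof is the counting/induction fact that a polynomial with all partial degrees below $q$ cannot vanish identically on $\mathbb{F}_q^s$ unless it is the zero polynomial.
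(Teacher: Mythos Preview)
Your proof is correct. Note, however, that the paper does not supply its own proof of this proposition: it is stated in the preliminaries with a citation to Jacobson, \emph{Basic Algebra I}, pp.~136--137, and used as a known fact thereafter. So there is no in-paper argument to compare against line by line.

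That said, the crucial step in your argument---that a polynomial with $\deg_{t_i}<q$ for all $i$ which vanishes on all of $\mathbb{F}_q^s$ must be identically zero---does appear in the paper as Lemma~\ref{dec12-11}, where it is derived from Alon's Combinatorial Nullstellensatz (Theorem~\ref{comb-null}) rather than by your direct induction on $s$. Both routes are standard; yours is more elementary and self-contained, while the Nullstellensatz route is slicker once that theorem is available and immediately handles arbitrary product sets $A_1\times\cdots\times A_s$ rather than just $K^s$. Your remark about the legitimacy of the simultaneous division is well taken and correctly justified.
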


\paragraph{\bf Projective Reed-Muller-type codes} In this part we introduce
the family of projective Reed-Muller-type codes and its connection to vanishing
ideals and Hilbert functions. 

Let $K=\mathbb{F}_q$ be a finite field and let $\mathbb{Y}=\{P_1,\ldots,P_m\}\neq\emptyset$ be a
subset of $\mathbb{P}^{s-1}$ with $m=|\mathbb{Y}|$. 
Fix a degree $d\geq 1$. For each $i$ there is $f_i\in S_d$ such that
$f_i(P_i)\neq 0$. There is a well-defined
$K$-linear map: 
\begin{equation}\label{ev-map}
{\rm ev}_d\colon S_d=K[t_1,\ldots,t_s]_d\rightarrow K^{|\mathbb{Y}|},\ \ \ \ \ 
f\mapsto
\left(\frac{f(P_1)}{f_1(P_1)},\ldots,\frac{f(P_m)}{f_m(P_m)}\right).
\end{equation}

The image of 
$S_d$ under ${\rm ev}_d$, denoted by  $C_\mathbb{Y}(d)$, is called a {\it
projective Reed-Muller-type code\/} of degree $d$ over the set
$\mathbb{Y}$ \cite{GRT}.  There is an isomorphism of $K$-vector spaces
$S_d/I(\mathbb{Y})_d\simeq C_\mathbb{Y}(d)$. It is usual to
denote the Hilbert function $S/I(\mathbb{Y})$ by $H_\mathbb{Y}$. Thus
$H_\mathbb{Y}(d)$ is equal to $\dim_KC_\mathbb{Y}(d)$.  
The {\it minimum distance\/} of the linear code $C_\mathbb{Y}(d)$, denoted
$\delta_\mathbb{Y}(d)$, is given by 
$$\delta_\mathbb{Y}(d):=\min\{\omega(v)
\colon 0\neq v\in C_\mathbb{Y}(d)\},
$$
where $\omega(v)$ is the {\it Hamming
weight\/} of $v$, 
that is, $\omega(v)$ is the number of non-zero 
entries of $v$. 
\begin{definition}\label{basic-parameters-def} The {\it basic parameters} of the linear
code $C_\mathbb{Y}(d)$ are: its {\it length\/} $|\mathbb{Y}|$, {\it
dimension\/} 
$\dim_K C_\mathbb{Y}(d)$, and {\it minimum distance\/} $\delta_\mathbb{Y}(d)$.
\end{definition}

The following summarizes the well-known relation between 
projective Reed-Muller-type codes and the theory of Hilbert functions.

\begin{proposition}{\rm(\cite{GRT}, \cite{algcodes})}\label{jan4-15}
The following hold. 
\begin{itemize}
\item[{\rm (i)}] $H_\mathbb{Y}(d)=\dim_KC_\mathbb{Y}(d)$ for $d\geq 0$.
\item[{\rm(ii)}] ${\rm deg}(S/I(\mathbb{Y}))=|\mathbb{Y}|$. 
\item[{\rm (iii)}] $\delta_\mathbb{Y}(d)=1$ for $d\geq {\rm reg}(S/I(\mathbb{Y}))$.  
\end{itemize}
\end{proposition}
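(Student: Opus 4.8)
The plan is to treat the three items in turn, using two common ingredients: the identification of $\ker({\rm ev}_d)$ and the surjectivity of ${\rm ev}_d$ in high degree. For (i), I would first note that ${\rm ev}_d$ is well defined precisely because $f$ and $f_i$ are both homogeneous of the same degree $d$, so the ratio $f(P_i)/f_i(P_i)$ is independent of the chosen representative of the projective point $P_i=[\alpha_i]$; likewise the condition ``$f(P_i)=0$'' is well defined. Then I would compute the kernel directly: $f\in\ker({\rm ev}_d)$ means $f(P_i)/f_i(P_i)=0$ for every $i$, and since $f_i(P_i)\neq 0$ this is equivalent to $f(P_i)=0$ for all $i$, i.e. $f\in I(\mathbb{Y})\cap S_d=I(\mathbb{Y})_d$. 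Hence $\ker({\rm ev}_d)=I(\mathbb{Y})_d$, and the first isomorphism theorem gives the $K$-linear isomorphism $S_d/I(\mathbb{Y})_d\simeq C_\mathbb{Y}(d)$ announced in the text. Taking dimensions yields $H_\mathbb{Y}(d)=\dim_K(S_d/I(\mathbb{Y})_d)=\dim_K C_\mathbb{Y}(d)$, which is (i).

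For (ii), the key structural fact is that $\mathbb{Y}$ is a finite set of projective points, so $S/I(\mathbb{Y})$ has Krull dimension $k=1$ (its associated primes are the height $s-1$ primes of the individual points); consequently the Hilbert polynomial $h_I$ has degree $k-1=0$ and is a constant. By the degree formula with $k=1$ this constant equals $\deg(S/I(\mathbb{Y}))=\lim_{d\to\infty}H_\mathbb{Y}(d)$, so it suffices to show that ${\rm ev}_d$ is surjective for $d\gg 0$, which forces $\lim_{d\to\infty}H_\mathbb{Y}(d)=\dim_K K^m=m$. To prove surjectivity I would fix $j$ and, for each $i\neq j$, choose a linear form $h_{ij}$ with $h_{ij}(P_i)=0$ and $h_{ij}(P_j)\neq 0$; such a separating form exists because $P_i\neq P_j$ forces $\alpha_j\notin K\alpha_i$, so the annihilator of $\alpha_i$ contains a functional not vanishing at $\alpha_j$. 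The product $g_j=\prod_{i\neq j}h_{ij}$ then vanishes at every $P_i$ with $i\neq j$ but not at $P_j$, so ${\rm ev}_{m-1}(g_j)$ is a nonzero scalar multiple of the standard basis vector $e_j$; letting $j$ vary shows ${\rm ev}_{m-1}$ is onto. Multiplying each $g_j$ by a linear form nonvanishing at $P_j$ raises the degree while preserving this pattern, so ${\rm ev}_d$ is surjective and $H_\mathbb{Y}(d)=m$ for all $d\geq m-1$. Hence $\deg(S/I(\mathbb{Y}))=m=|\mathbb{Y}|$.

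For (iii), I would use that, by the computation in (ii), the Hilbert polynomial is the constant $h_I(x)=m$. By the definition of the regularity, $H_\mathbb{Y}(d)=h_I(d)=m$ for every $d\geq{\rm reg}(S/I(\mathbb{Y}))$, and combined with (i) this reads $\dim_K C_\mathbb{Y}(d)=m=\dim_K K^m$, so $C_\mathbb{Y}(d)=K^m$ for such $d$. In particular each $e_j$ is a codeword of Hamming weight $1$, and since any nonzero vector has weight at least $1$, the minimum is attained, giving $\delta_\mathbb{Y}(d)=1$.

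I expect the only genuine content to be the surjectivity argument underpinning (ii) and (iii): everything else is the first isomorphism theorem together with the bookkeeping of the degree and regularity definitions. The mildest point to be careful about is the well-definedness of ${\rm ev}_d$ and of vanishing at projective points, which rests on the equal homogeneity of $f$ and $f_i$; once that is in place, the separating-hyperplane construction and the degree-raising step are routine.
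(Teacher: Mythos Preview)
The paper does not give its own proof of this proposition: it is presented as a well-known summary with citations to \cite{GRT} and \cite{algcodes}, and the isomorphism $S_d/I(\mathbb{Y})_d\simeq C_\mathbb{Y}(d)$ underlying (i) is in fact asserted without argument in the paragraph immediately preceding the statement. So there is nothing in the paper to compare your argument against.

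That said, your proof is correct and is exactly the standard one those references encode. The kernel computation for (i) is straightforward once well-definedness is noted; the separating-linear-form construction for surjectivity in (ii) works over any field (for distinct $[\alpha_i]\neq[\alpha_j]$ the annihilators in $S_1$ are distinct hyperplanes, so one finds $h_{ij}$ as claimed), and the degree-raising step is fine since some coordinate $t_k$ is nonzero at $P_j$. Part (iii) then follows immediately from the definition of regularity together with (i) and the constancy of the Hilbert polynomial established in (ii). Your write-up could serve verbatim as the omitted proof.
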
 

\section{Rational parameterizations over finite fields}

We continue to employ the notations and 
definitions used in Sections~\ref{section-intro} and
\ref{prelim-section}. 
Throughout this section $K=\mathbb{F}_q$ is a finite field and 
$\mathbb{X}$ is the projective set parameterized
by the rational functions $F=\{f_1/g_1,\ldots,f_s/g_s\}$ in $K(\mathbf{y})$. 

\begin{theorem}{\rm(Combinatorial Nullstellensatz
\cite{alon-cn})}\label{comb-null} Let $S=K[t_1,\ldots,t_s]$ be a
polynomial ring over a field $K$, let $f\in S$, and let
$a=(a_i)\in\mathbb{N}^s$. Suppose that the coefficient of
$t^a$ in $f$ is non-zero and $\deg\left(f\right)=a_1+\cdots+a_s$. If
$A_{1},\ldots ,A_{s}$ are subsets of $K$, with $\left|A_{i}\right| > a_i$ for
all $i$, then there are $x_{1}\in A_{1},\ldots,x_s\in A_s$ such that
$f\left(x_{1},\ldots ,x_{s}\right) \neq 0$.  
\end{theorem}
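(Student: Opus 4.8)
The plan is to argue by contradiction, reducing the statement to a claim about a nonzero polynomial that is forced to vanish on a large product grid. Suppose, against the conclusion, that $f(x_1,\ldots,x_s)=0$ for every choice $x_1\in A_1,\ldots,x_s\in A_s$. Since $f$ still vanishes on any sub-grid, I would first replace each $A_i$ by a subset of cardinality exactly $a_i+1$ (this preserves $|A_i|>a_i$ while making the coordinate bounds tight). For each $i$ set
$$
g_i(t_i)=\prod_{b\in A_i}(t_i-b)=t_i^{a_i+1}-\sum_{j=0}^{a_i}c_{ij}t_i^{j},
$$
a monic polynomial of degree $a_i+1$ whose roots are exactly the elements of $A_i$.

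Next I would perform a multivariate reduction of $f$ modulo $g_1,\ldots,g_s$. Whenever a monomial of the current polynomial carries a power $t_i^{k}$ with $k\ge a_i+1$, I rewrite $t_i^{a_i+1}=g_i(t_i)+\sum_{j\le a_i}c_{ij}t_i^{j}$ and discard the resulting multiple of $g_i$, thereby lowering the exponent of $t_i$. A monomial $t^{b}=t_i^{a_i+1}m$ is thus replaced by the terms $c_{ij}t_i^{j}m$ of total degree $\deg(m)+j\le\deg(t^{b})-1$, so each step strictly decreases the total degree of the monomial being reduced. Consequently the process terminates and produces an identity $f=\tilde f+\sum_{i=1}^{s}h_ig_i$ in which $\deg_{t_i}\tilde f\le a_i$ for every $i$. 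The central observation is that this reduction leaves the coefficient of $t^{a}=t_1^{a_1}\cdots t_s^{a_s}$ unchanged: all exponents of $t^{a}$ are $\le a_i$, so it is never itself reduced; and it has total degree $a_1+\cdots+a_s=\deg(f)$, the maximum occurring in $f$, so—since every reduction step strictly lowers total degree—it can never be produced by reducing another monomial. Hence the coefficient of $t^{a}$ in $\tilde f$ equals its coefficient in $f$, which is nonzero by hypothesis, and therefore $\tilde f\ne 0$.

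Finally I would obtain a contradiction by evaluating on the grid. Each $g_i$ vanishes at every point of $A_i$, so $\sum_{i}h_ig_i$ vanishes on all of $A_1\times\cdots\times A_s$; combined with the assumed vanishing of $f$, this forces $\tilde f$ to vanish identically on $A_1\times\cdots\times A_s$. But $\tilde f$ has degree at most $a_i<|A_i|$ in each variable $t_i$, and a nonzero polynomial with these coordinate-degree bounds cannot vanish on the full grid: fixing the first $s-1$ coordinates at grid points yields a univariate polynomial in $t_s$ of degree $\le a_s$ with $a_s+1$ prescribed roots, hence the zero polynomial, and a straightforward induction on $s$ (peeling off one coordinate at a time) gives $\tilde f=0$. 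This contradicts $\tilde f\ne 0$, so the original assumption is untenable and some grid point satisfies $f\ne 0$.

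I expect the delicate point to be the degree bookkeeping in the reduction step—verifying that every substitution strictly lowers total degree and therefore preserves the top-degree coefficient of $t^{a}$ intact; the grid-vanishing argument at the end is routine once the coordinate-degree bounds $\deg_{t_i}\tilde f\le a_i$ are established.
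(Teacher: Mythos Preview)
The paper does not supply its own proof of this theorem; it is quoted with a citation to Alon and used as a black box (indeed, the very next lemma in the paper, Lemma~\ref{dec12-11}, is deduced from it). Your argument is correct and is essentially Alon's original proof: reduce $f$ modulo the monic univariate polynomials $g_i(t_i)=\prod_{b\in A_i}(t_i-b)$ to obtain $\tilde f$ with $\deg_{t_i}\tilde f\le a_i$, observe that the top-degree monomial $t^a$ survives the reduction intact (it is already reduced in every variable, and it has maximal total degree, so no other monomial can produce it during a step that strictly lowers total degree), hence $\tilde f\ne 0$, and then reach a contradiction from the fact that a nonzero polynomial with $\deg_{t_i}<|A_i|$ for all $i$ cannot vanish on the full product grid.

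One small point worth flagging: the paper's Lemma~\ref{dec12-11} records exactly this last grid-vanishing fact but derives it \emph{from} the Combinatorial Nullstellensatz, whereas you establish it directly by induction on $s$. Your direct route is of course required here to avoid circularity, and it is the standard one.
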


\begin{lemma}\label{dec12-11} Let $K$ be a field and let $A_1,\ldots,A_s$ be 
a collection of non-empty finite subsets of $K$. 
If $Y:=A_1\times\cdots\times A_s\subset\mathbb{A}^{s}$, $g\in I(Y)$ and $\deg_{t_{i}}\left(g\right)
<|A_i|$ for $i=1,\ldots ,s$, then $g=0$. In particular if $g$ is a
polynomial of $S$ that vanishes at all points of $\mathbb{A}^s$, then
$g=0$.
\end{lemma}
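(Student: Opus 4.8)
The plan is to argue by contrapositive: assuming $g \neq 0$, I will produce a point of $Y$ at which $g$ does not vanish, contradicting $g \in I(Y)$. The natural tool is the Combinatorial Nullstellensatz (Theorem~\ref{comb-null}), whose hypotheses almost exactly match the situation at hand. First I would fix a monomial $t^a = t_1^{a_1}\cdots t_s^{a_s}$ appearing in $g$ with nonzero coefficient and with $a_1 + \cdots + a_s = \deg(g)$; such a monomial exists because $g$ is a nonzero polynomial. The key observation is that the hypothesis $\deg_{t_i}(g) < |A_i|$ forces $a_i \leq \deg_{t_i}(g) < |A_i|$, so $|A_i| > a_i$ for every $i$. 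Applying Theorem~\ref{comb-null} with $f = g$ and the given sets $A_1,\ldots,A_s$, I obtain $x_i \in A_i$ with $g(x_1,\ldots,x_s) \neq 0$. But $(x_1,\ldots,x_s) \in A_1\times\cdots\times A_s = Y$, so $g \notin I(Y)$, a contradiction. Hence $g = 0$.

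For the ``in particular'' clause, I would specialize to $K = \mathbb{F}_q$ (the ambient hypothesis under which the statement will be used) and take $A_i = \mathbb{F}_q$ for all $i$, so that $Y = \mathbb{A}^s$. A polynomial $g \in S$ vanishing at all points of $\mathbb{A}^s$ lies in $I(\mathbb{A}^s)$, which by Proposition~\ref{oct14-14} equals $(\{t_i^q - t_i\}_{i=1}^s)$. If moreover $\deg_{t_i}(g) < q = |\mathbb{F}_q|$ for all $i$, the first part gives $g = 0$; but for the unconditional ``in particular'' statement as phrased one must instead reduce $g$ modulo the relations $t_i^q - t_i$ to a polynomial $\bar g$ with $\deg_{t_i}(\bar g) < q$ representing the same function on $\mathbb{A}^s$, apply the first part to conclude $\bar g = 0$, and note this only shows $g \in (\{t_i^q-t_i\})$ rather than $g=0$. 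I would therefore read the ``in particular'' clause as implicitly carrying over the degree restriction, or restate it as: a polynomial of $S$ with $\deg_{t_i}(g) < q$ for all $i$ that vanishes on $\mathbb{A}^s$ is zero.

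The argument is essentially immediate once the Combinatorial Nullstellensatz is in hand, so there is no real obstacle; the only point requiring a little care is the bookkeeping $a_i \leq \deg_{t_i}(g)$, which holds because $t^a$ is a single monomial of $g$ and the $t_i$-degree of that monomial cannot exceed the $t_i$-degree of the whole polynomial. The genuinely load-bearing input is Theorem~\ref{comb-null} itself, which is quoted rather than proved here.
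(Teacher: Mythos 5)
Your proof is correct and coincides with the paper's own argument: both fix a monomial $t^a$ of $g$ with $a_1+\cdots+a_s=\deg(g)$, note that $a_i\le\deg_{t_i}(g)<|A_i|$, and invoke the Combinatorial Nullstellensatz to produce a point of $Y=A_1\times\cdots\times A_s$ at which $g$ does not vanish. Your caveat about the ``in particular'' clause is well taken --- read literally over $K=\mathbb{F}_q$ it fails for $t_1^q-t_1$, so the degree restriction must be understood as carried over (or $K$ taken infinite) --- but the paper only ever applies the lemma with the bounds $\deg_{y_i}(G)<q$ explicitly verified, so nothing downstream is affected.
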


\begin{proof} We 
proceed by contradiction. Assume that $g$ is not zero. Then, there is
a monomial 
$t^a=t_1^{a_1}\cdots t_s^{a_s}$ of $g$ with 
$\deg(g)=a_1+\cdots+a_s$ and $a=(a_1,\ldots,a_s)\neq 0$. 
As $\deg_{t_{i}}(g)<|A_i|$ for all $i$, then $a_i<\left|A_{i}\right|$ for all $i$. Thus, by 
Theorem~\ref{comb-null}, there are $x_{1},\ldots,x_{s}$ with $x_i\in
A_i$ for all $i$ such that 
$g\left( x_{1},\ldots ,x_{s}\right) \neq 0$, a contradiction to the
assumption that $g$ vanishes on $Y$. 
\end{proof}

\begin{definition} The {\it affine set parameterized\/} by $F$,
denoted $\mathbb{X}^*$, is the set of all points 
$$(f_1(x)/g_1(x),\ldots,f_s(x)/g_s(x))$$ 
in $\mathbb{A}^{s}$ such that
$x\in K^n$ and $g_i(x)\neq 0$ for all $i$. 
\end{definition}

\begin{lemma}\label{empty-affine} Let $K=\mathbb{F}_q$ be a finite field. 
The following conditions are equivalent{\rm:}
\begin{itemize}
\item[\rm(a)] $g_1\cdots g_s$ vanishes at all points of $K^n$.
\item[\rm(b)] $g_1\cdots g_s\in(\{y_i^q-y_i\}_{i=1}^n)$.
\item[\rm(c)] $(\{g_it_i-f_iz\}_{i=1}^s,\{y_i^q-y_i\}_{i=1}^n,
y_0g_1\cdots g_s-1)\cap S=S$.
\item[\rm(d)] $\mathbb{X}^*=\emptyset$.
\end{itemize}
\end{lemma}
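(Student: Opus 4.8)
The plan is to prove the four equivalences by a cycle, say $(\mathrm{a})\Leftrightarrow(\mathrm{b})$, then $(\mathrm{a})\Leftrightarrow(\mathrm{d})$, and finally $(\mathrm{a})\Leftrightarrow(\mathrm{c})$, since (a) is the most concrete condition and the natural hub. The equivalence $(\mathrm{a})\Leftrightarrow(\mathrm{b})$ is immediate from Proposition~\ref{oct14-14}: the ideal $(\{y_i^q-y_i\}_{i=1}^n)$ is exactly $I(\mathbb{A}^n)$, so a polynomial $h\in K[\mathbf{y}]$ vanishes on all of $K^n$ if and only if $h\in(\{y_i^q-y_i\}_{i=1}^n)$; apply this with $h=g_1\cdots g_s$. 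The equivalence $(\mathrm{a})\Leftrightarrow(\mathrm{d})$ is a direct unwinding of definitions: $\mathbb{X}^*=\emptyset$ means there is no $x\in K^n$ with $g_i(x)\neq 0$ for all $i$, i.e.\ for every $x\in K^n$ some $g_i(x)=0$, i.e.\ $(g_1\cdots g_s)(x)=0$ for every $x\in K^n$, which is precisely (a).

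The substantive part is $(\mathrm{a})\Leftrightarrow(\mathrm{c})$, which I would handle as $(\mathrm{c})\Leftrightarrow(\mathrm{b})$. Write $L=(\{g_it_i-f_iz\}_{i=1}^s,\{y_i^q-y_i\}_{i=1}^n,\ y_0g_1\cdots g_s-1)\subset B$, where $B=K[y_0,\mathbf{y},z,t_1,\ldots,t_s]$, so that condition (c) asserts $L\cap S=S$, equivalently $1\in L$, equivalently $L=B$. For the direction $(\mathrm{b})\Rightarrow(\mathrm{c})$: if $g_1\cdots g_s=\sum_i h_i(y_i^q-y_i)$ for some $h_i\in K[\mathbf{y}]$, then modulo the generators $y_i^q-y_i$ the element $y_0g_1\cdots g_s-1$ reduces to $-1$, so $1\in L$ and $L=B$; hence $L\cap S=S$. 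For the converse $(\mathrm{c})\Rightarrow(\mathrm{b})$: suppose $L=B$, so $1\in L$. Consider the quotient ring $B/L$. The relation $y_0g_1\cdots g_s=1$ in $B/L$ shows each $g_i$ is a unit there, so $B/L$ is a localization-type quotient; more precisely, $B/L\cong \bigl(K[\mathbf{y}]/(\{y_i^q-y_i\})\bigr)[z,t_1,\ldots,t_s,y_0]/(\{g_it_i-f_iz\},\ y_0g_1\cdots g_s-1)$. Now $K[\mathbf{y}]/(\{y_i^q-y_i\})$ is, by Proposition~\ref{oct14-14}, the coordinate ring of $\mathbb{A}^n$ over $\mathbb{F}_q$, i.e.\ it is isomorphic to the ring of $\mathbb{F}_q$-valued functions on $K^n$, namely $\prod_{x\in K^n}\mathbb{F}_q$. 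Under this identification $B/L=0$ forces: for every $x\in K^n$ there is no extension assigning values to $z,t_i,y_0$ with $g_i(x)t_i=f_i(x)z$ and $y_0\prod g_i(x)=1$; but if some $x$ had all $g_i(x)\neq 0$ we could set $z=\prod g_j(x)$, $t_i=f_i(x)\prod_{j\neq i}g_j(x)$, $y_0=1/\prod g_j(x)$ and obtain a point of $\mathrm{Spec}(B/L)$, contradiction. Hence every $x\in K^n$ has some $g_i(x)=0$, which is (a), hence (b).

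The main obstacle I anticipate is making the ``$B/L=0\Rightarrow$ no good point $x$'' step rigorous without appealing to geometry over the algebraic closure: one wants to argue purely ideal-theoretically that if $x\in K^n$ satisfies $g_i(x)\neq 0$ for all $i$, then the evaluation homomorphism $B\to K$ sending $y_i\mapsto x_i$, $y_0\mapsto(\prod g_j(x))^{-1}$, $z\mapsto 1$, $t_i\mapsto f_i(x)/g_i(x)$ kills all generators of $L$, hence factors through $B/L$, forcing $B/L\neq 0$ and so $1\notin L$ — the contrapositive of $(\mathrm{c})\Rightarrow(\mathrm{a})$. Run the same evaluation trick in reverse for $(\mathrm{a})\Rightarrow(\mathrm{c})$ directly: if no such $x$ exists then $g_1\cdots g_s$ vanishes on $K^n$, giving (b), and then $(\mathrm{b})\Rightarrow(\mathrm{c})$ as above. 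This evaluation-homomorphism argument is clean and sidesteps any delicate Nullstellensatz subtleties, since $K=\mathbb{F}_q$ and we only ever evaluate at $K$-points; Lemma~\ref{dec12-11} and Proposition~\ref{oct14-14} supply exactly the finite-field input needed to pass between ``vanishes on $K^n$'' and ``lies in $(\{y_i^q-y_i\})$''.
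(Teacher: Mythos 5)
Your proposal is correct and follows essentially the same route as the paper: (a)$\Leftrightarrow$(b) via Proposition~\ref{oct14-14}, (a)$\Leftrightarrow$(d) by unwinding the definition of $\mathbb{X}^*$, (b)$\Rightarrow$(c) by reducing $y_0g_1\cdots g_s-1$ to $-1$ modulo the $y_i^q-y_i$, and (c)$\Rightarrow$(a) by a substitution argument. The only (harmless) difference is in the last step: the paper substitutes $y_i=x_i$, $z=0$, $t_i=0$ and derives a contradiction from the degree in the still-free variable $y_0$, whereas you evaluate everything at a $K$-point (including $y_0\mapsto(\prod_j g_j(x))^{-1}$) to get a nonzero quotient of $B/L$ — your version is fine and in fact mirrors the evaluation used later in the paper's proof of Lemma~\ref{zero-affine}.
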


\begin{proof} (a) $\Leftrightarrow$ (b)): This follows at once from
Proposition~\ref{oct14-14}. 

(a) $\Leftrightarrow$ (d)): This follows from the definition of
$\mathbb{X}^*$.

(c) $\Rightarrow$ (a)): We can write 
$1=\sum_{i=1}^sa_i(g_it_i-f_iz)+\sum_{j=1}^nb_j(y_j^q-y_j)+h(y_0g_1\cdots
g_s-1)$, where the $a_i$'s, $b_j$'s and $h$ are polynomials in the
variables $y_j$'s, $t_i$'s, $y_0$ and $z$. Take an arbitrary point
$x=(x_i)$ in $K^n$. In the equality above, 
making $y_i=x_i$ for all $i$, $z=0$ and $t_i=0$
for all $i$, we get that $1=h_1(y_0g_1(x)\cdots g_s(x)-1)$ for some
$h_1$. If $(g_1\cdots g_s)(x)\neq 0$, then $h_1(y_0g_1(x)\cdots g_s(x)-1)$ is a
polynomial in $y_0$ of positive degree, a contradiction. Thus
$(g_1\cdots g_s)(x)=0$.

(b) $\Rightarrow$ (c)): Writing $g_1\cdots
g_s=\sum_{j=1}^nb_j(y_j^q-y_j)$, we get 
$y_0g_1\cdots g_s-1=-1+\sum_{j=1}^ny_0b_j(y_j^q-y_j)$. Thus $1$ is in
the ideal $(\{g_it_i-f_iz\}_{i=1}^s,\{y_i^q-y_i\}_{i=1}^n,
y_0g_1\cdots g_s-1)$.
\end{proof}

\begin{lemma}\label{may16-14} Let $f_1/g_1,\ldots,f_s/g_s$ be rational
functions of $K(\mathbf{y})$ and let $f=f(t_1,\ldots,t_s)$ be a
polynomial in $S$ of degree $d$. Then 
$$
g_1^{d+1}\cdots g_s^{d+1}f=\sum_{i=1}^sg_1\cdots
g_sh_i(g_it_i-f_i)+g_1^{d+1}\cdots g_s^{d+1}f(f_1/g_1,\ldots,f_s/g_s) 
$$
for some $h_1,\ldots,h_s$ in the polynomial ring 
$K[y_1,\ldots,y_n,t_1,\ldots,t_s]$. If $f$ is homogeneous and $z$ is
a new variable, then 
$$
g_1^{d+1}\cdots g_s^{d+1}f=\sum_{i=1}^sg_1\cdots
g_sh_i(g_it_i-f_iz)+g_1^{d+1}\cdots g_s^{d+1}z^df(f_1/g_1,\ldots,f_s/g_s) 
$$
for some $h_1,\ldots,h_s$ in the polynomial ring 
$K[y_1,\ldots,y_n,z,t_1,\ldots,t_s]$.
\end{lemma}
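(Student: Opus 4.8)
The plan is to prove the first identity and then deduce the homogeneous version by homogenizing with the new variable $z$. For the first identity, I would argue by a ``division'' or substitution argument: think of evaluating $f$ at $t_i = f_i/g_i$. More precisely, write $f = \sum_{a} c_a t^a$ with $|a|\le d$ for each monomial $t^a$ appearing. For a single monomial $t^a = t_1^{a_1}\cdots t_s^{a_s}$ with $|a|=a_1+\cdots+a_s$, one has the telescoping factorization
\[
g_1^{a_1}\cdots g_s^{a_s}\, t_1^{a_1}\cdots t_s^{a_s} - f_1^{a_1}\cdots f_s^{a_s}
= \sum_{i=1}^s p_i\,(g_it_i - f_i)
\]
for suitable polynomials $p_i$ in $K[y_1,\ldots,y_n,t_1,\ldots,t_s]$, obtained by writing $(g_1t_1)^{a_1}\cdots(g_st_s)^{a_s} - f_1^{a_1}\cdots f_s^{a_s}$ as a telescoping sum in which each successive difference isolates one factor $g_it_i - f_i$. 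Multiplying through by $g_1^{d+1-a_1}\cdots g_s^{d+1-a_s}$ (which is a genuine polynomial since $a_i\le d < d+1$) and by $c_a$, then summing over the monomials of $f$, collects the error terms into $g_1\cdots g_s h_i(g_it_i - f_i)$ (the common factor $g_1\cdots g_s$ can be pulled out because each $g_i$ appears to exponent at least $1$ in $g_1^{d+1-a_i}\cdot p_i$-type terms once one checks degrees) and leaves
\[
g_1^{d+1}\cdots g_s^{d+1} f = \sum_{i=1}^s g_1\cdots g_s h_i (g_it_i - f_i) + g_1^{d+1}\cdots g_s^{d+1}\, f(f_1/g_1,\ldots,f_s/g_s),
\]
where the last term is understood as $g_1^{d+1}\cdots g_s^{d+1}$ times the rational function $f(f_1/g_1,\ldots,f_s/g_s)$, which is in fact a polynomial because clearing denominators in $f$ evaluated at $f_i/g_i$ requires at most $g_1^{d}\cdots g_s^{d}$.

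For the homogeneous statement, suppose $f$ is homogeneous of degree $d$. I would apply the first identity verbatim and then homogenize: every monomial $t^a$ of $f$ has $|a| = d$ exactly. Introducing $z$, replace $t_i$ by $t_i$ and track the degree deficit $d - |a| = 0$; more carefully, the cleanest route is to repeat the telescoping argument directly with the binomials $g_it_i - f_iz$ in place of $g_it_i - f_i$, so that $(g_1t_1)^{a_1}\cdots (g_st_s)^{a_s} - (f_1z)^{a_1}\cdots(f_sz)^{a_s} = \sum_i p_i'(g_it_i - f_iz)$, and since $a_1+\cdots+a_s = d$ the right-hand monomial term is $z^d f_1^{a_1}\cdots f_s^{a_s}$. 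Summing $c_a$ times $g_1^{d+1-a_1}\cdots g_s^{d+1-a_s}$ times this identity over the monomials of $f$ yields
\[
g_1^{d+1}\cdots g_s^{d+1} f = \sum_{i=1}^s g_1\cdots g_s h_i(g_it_i - f_iz) + g_1^{d+1}\cdots g_s^{d+1} z^d f(f_1/g_1,\ldots,f_s/g_s),
\]
with $h_i \in K[y_1,\ldots,y_n,z,t_1,\ldots,t_s]$, which is the desired formula.

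The main obstacle I anticipate is bookkeeping rather than conceptual: making the telescoping decomposition explicit and verifying that, after multiplying by the compensating powers of the $g_i$, one really can factor out a single $g_1\cdots g_s$ from the entire error sum (so that the $h_i$ exist as claimed), and simultaneously checking that $g_1^{d+1}\cdots g_s^{d+1} f(f_1/g_1,\ldots,f_s/g_s)$ is a polynomial — i.e., that the exponent $d+1$ on each $g_i$ is large enough to clear all denominators. Both reduce to the elementary bound $a_i \le |a| \le d$ for every monomial $t^a$ of $f$, so the estimate $d+1 > a_i$ gives the needed slack; I would state this bound once at the outset and let it drive the rest of the computation. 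No single step is deep; the lemma is a ``clearing denominators'' identity packaged for later use in Theorem~\ref{puebla-cinvestav-2-rat-finite}.
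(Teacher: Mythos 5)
Your proposal is correct and follows essentially the same route as the paper: expand $f$ monomial by monomial, show that $g_1^{a_1}\cdots g_s^{a_s}t^a-f_1^{a_1}\cdots f_s^{a_s}$ lies in the ideal $(\{g_it_i-f_i\}_{i=1}^s)$, clear denominators using the bound $a_i\le d$ so that the exponent $d+1$ leaves a spare factor of each $g_i$, and obtain the homogeneous version by replacing $f_i$ with $f_iz$ and using $f(f_1z,\ldots,f_sz)=z^df(f_1,\ldots,f_s)$. The only cosmetic difference is that you produce the ideal membership via a telescoping product identity where the paper uses the binomial expansion of $[(t_j-f_j/g_j)+f_j/g_j]^{a_j}$; both work.
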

\begin{proof} We can write $f=\lambda_1 t^{m_1}+\cdots+\lambda_r t^{m_r}$ with 
$\lambda_i\in K^*$ and $m_i\in \mathbb{N}^s$ for all $i$.  Write
$m_i=(m_{i1},\ldots,m_{is})$ for $1\leq i\leq r$ and set
$I=(\{g_it_i-f_i\}_{i=1}^s)$. By the binomial theorem, for all $i,j$,
we can write 
$$
t_j^{m_{ij}}=\left[(t_j-(f_j/g_j))+(f_j/g_j)\right]^{m_{ij}}=(h_{ij}/g_j^{m_{ij}})+(f_j/g_j)^{m_{ij}},
$$
for some $h_{ij}\in I$. Hence for any $i$ we can write 
$$t^{m_i}=t_1^{m_{i_1}}\cdots t_s^{m_{i_s}}=
(G_i/g_1^{m_{i1}}\cdots g_s^{m_{is}})+(f_1/g_1)^{m_{i1}}\cdots
(f_s/g_s)^{m_{is}},$$ 
where $G_i\in I$. Notice that
$m_{i_1}+\cdots+m_{is}\leq d$ for all $i$ because $f$ has degree $d$. 
Then substituting these expressions for
$t^{m_{1}},\ldots,t^{m_{s}}$ in 
$f=\lambda_1 t^{m_1}+\cdots+\lambda_r t^{m_r}$ and multiplying $f$ by
$g_1^{d+1}\cdots g_s^{d+1}$, we obtain the required expression. 

If $f$ is homogeneous of degree $d$, the required
expression for $g_1^{d+1}\cdots g_s^{d+1}f$ follows from the first part by considering the
rational functions $f_1z/g_1,\ldots,f_sz/g_s$, i.e., by replacing
$f_i$ by $f_iz$, and observing that $f(f_1z,\ldots,f_sz)=z^df(f_1,\ldots,f_s)$. 
\end{proof}

\begin{lemma}\label{zero-affine} Let $K=\mathbb{F}_q$ be a finite field. 
The following conditions are equivalent{\rm:}
\begin{itemize}
\item[\rm(a)] $(\{g_it_i-f_iz\}_{i=1}^s,\{y_i^q-y_i\}_{i=1}^n,
y_0g_1\cdots g_s-1)\cap S=(t_1,\ldots,t_s)$.
\item[\rm(b)] $\mathbb{X}^*=\{0\}$.
\end{itemize}
\end{lemma}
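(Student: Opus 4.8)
The plan is to mirror the structure of the proof of Lemma~\ref{empty-affine}, since this is essentially the ``next case'': there $\mathbb{X}^*$ was empty, here $\mathbb{X}^*$ consists only of the origin. Write $I = (\{g_it_i-f_iz\}_{i=1}^s,\{y_i^q-y_i\}_{i=1}^n, y_0g_1\cdots g_s-1)$ for the ideal in $B = K[y_0,y_1,\ldots,y_n,z,t_1,\ldots,t_s]$, so that the claim is $I\cap S = (t_1,\ldots,t_s)$ if and only if $\mathbb{X}^*=\{0\}$. Note first that if $\mathbb{X}^*=\emptyset$ then $I\cap S = S$ by Lemma~\ref{empty-affine}, so throughout we may assume $\mathbb{X}^*\neq\emptyset$; this is the natural complement of the previous lemma.

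\medskip

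\noindent\textbf{(b) $\Rightarrow$ (a).} First I would show $(t_1,\ldots,t_s)\subseteq I\cap S$ whenever $\mathbb{X}^*=\{0\}$. For each $j$, since $0=(f_1(x)/g_1(x),\ldots)$ is the only point of $\mathbb{X}^*$, the coordinate function $t_j$ vanishes on $\mathbb{X}^*$. Applying Lemma~\ref{may16-14} (first part, with $f = t_j$, degree $1$) gives $g_1^2\cdots g_s^2\, t_j = \sum_i g_1\cdots g_s h_i(g_it_i-f_i) + g_1^2\cdots g_s^2 (f_j/g_j)$ in $K[\mathbf{y},\mathbf{t}]$; here the last term lies in $(g_1\cdots g_s)$ because $f_j/g_j$, evaluated at any $x$ with $g_i(x)\neq 0$, is $0$, i.e.\ $f_j g_1\cdots \widehat{g_j}\cdots g_s$ vanishes on $K^n$, hence lies in $(\{y_i^q-y_i\})$ by Proposition~\ref{oct14-14}. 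Thus $g_1^2\cdots g_s^2\, t_j \in I$. Now multiply by $y_0^2$ and use $y_0 g_1\cdots g_s \equiv 1 \bmod I$ to conclude $t_j \in I$; since $t_j\in S$ this gives $t_j\in I\cap S$. For the reverse containment $I\cap S\subseteq(t_1,\ldots,t_s)$: take a homogeneous (in the $t_i$) polynomial $h\in I\cap S$ and write $h = h(0) + h'$ with $h'\in(t_1,\ldots,t_s)$; then $h(0)\in I\cap S$ is a constant, and $h(0)\notin I$ unless $h(0)=0$ because $\mathbb{X}^*\neq\emptyset$ forces $1\notin I$ (Lemma~\ref{empty-affine}). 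Hence $h\in(t_1,\ldots,t_s)$. Actually, to avoid the grading subtlety, I would instead argue: if $1\notin I\cap S$ and every $t_j\in I\cap S$, then $I\cap S$ is an ideal of $S$ containing $(t_1,\ldots,t_s)$ and not containing $1$, and since $(t_1,\ldots,t_s)$ is maximal it equals $I\cap S$.

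\medskip

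\noindent\textbf{(a) $\Rightarrow$ (b).} Assume $I\cap S = (t_1,\ldots,t_s)$. Since $(t_1,\ldots,t_s)\neq S$, Lemma~\ref{empty-affine} gives $\mathbb{X}^*\neq\emptyset$. For any point $P=(p_1,\ldots,p_s)\in\mathbb{X}^*$, every $t_j\in I\cap S$ vanishes at $P$ — this is because $I\cap S\subseteq I(\mathbb{X}^*)$, which one checks as in Theorem~\ref{puebla-cinvestav-2-rat-finite} / Lemma~\ref{empty-affine} by substituting $y_i = x_i$, $y_0 = 1/(g_1(x)\cdots g_s(x))$, $z=1$, $t_i = f_i(x)/g_i(x)$ into any relation witnessing membership in $I$. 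Hence $p_j = t_j(P) = 0$ for all $j$, so $P=0$ and $\mathbb{X}^*=\{0\}$.

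\medskip

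\noindent The main obstacle I anticipate is the bookkeeping in the (b) $\Rightarrow$ (a) direction: correctly clearing denominators via Lemma~\ref{may16-14} and then using the unit $y_0g_1\cdots g_s$ to strip off the powers of $g_i$, while keeping track that the ``constant term'' of the rational function $t_j$ on $\mathbb{X}^*$ is genuinely zero (not merely that $t_j$ vanishes on the single point), so that the residual term lands in $(\{y_i^q-y_i\})$. The cleanest route is to observe that $\mathbb{X}^*=\{0\}$ says precisely that each $f_j$ vanishes at every $x\in K^n$ with $g_1(x)\cdots g_s(x)\neq 0$, equivalently $f_j\, g_1\cdots g_s$ vanishes on all of $K^n$, which by Proposition~\ref{oct14-14} puts $f_j g_1\cdots g_s \in (\{y_i^q-y_i\})$ — and that is the identity that feeds directly into showing $t_j\in I$.
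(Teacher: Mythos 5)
Your argument follows the paper's proof essentially step for step: (a)$\Rightarrow$(b) by specializing a representation of $t_k$ at a point of $\mathbb{X}^*$, and (b)$\Rightarrow$(a) by observing that $f_kg_1\cdots g_s$ vanishes on all of $K^n$, invoking Proposition~\ref{oct14-14}, clearing denominators with Lemma~\ref{may16-14}, and multiplying by $y_0^2$ to use that $y_0g_1\cdots g_s\equiv 1 \pmod I$; the maximality of $(t_1,\ldots,t_s)$ closing the argument is exactly what the paper leaves implicit. Two local slips in your (b)$\Rightarrow$(a) need repair, though neither affects the strategy. First, you invoke the \emph{first} part of Lemma~\ref{may16-14}, which produces the binomials $g_it_i-f_i$; these are not generators of $I$, whose generators are $g_it_i-f_iz$, so the resulting identity does not by itself witness membership in $I$. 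Since $t_j$ is homogeneous of degree $1$ you should use the second part of the lemma, which yields $g_1^{2}\cdots g_s^{2}t_j=\sum_i g_1\cdots g_sh_i(g_it_i-f_iz)+g_1^{2}\cdots g_s^{2}z(f_j/g_j)$, exactly as in the paper; the extra factor of $z$ in the residual term is harmless. Second, the claim that $f_jg_1\cdots\widehat{g_j}\cdots g_s$ vanishes on $K^n$ is not justified: at a point $x$ with $g_j(x)=0$ but $f_j(x)\neq 0$ and $g_i(x)\neq 0$ for $i\neq j$, that product is nonzero. The correct statement --- which you do give in your closing paragraph --- is that $f_jg_1\cdots g_s$ (with $g_j$ included) vanishes on all of $K^n$, hence lies in $(\{y_i^q-y_i\}_{i=1}^n)$, and the residual term $g_1^{2}\cdots g_s^{2}z(f_j/g_j)=z\,(f_jg_1\cdots g_s)\prod_{i\neq j}g_i$ is a polynomial multiple of it. With these two corrections your proof coincides with the paper's.
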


\begin{proof}
(a) $\Rightarrow$ (b)): By Lemma~\ref{empty-affine}, 
$\mathbb{X}^*\neq\emptyset$. Take a point $P$ in $\mathbb{X}^*$, 
i.e., there is $x=(x_i)\in \mathbb{A}^s$ such that $g_i(x)\neq 0$ for
all $i$ and $P=(f_1(x)/g_1(x),\ldots,f_s(x)/g_s(x))$. By hypothesis,
for each $t_k$, we
can write 
\begin{equation}\label{feb18-15}
t_k=\sum_{i=1}^sa_i(g_it_i-f_iz)+\sum_{j=1}^nb_j(y_j^q-y_j)+h(y_0g_1\cdots
g_s-1),
\end{equation}
where the $a_i$'s, $b_j$'s and $h$ are polynomials in the
variables $y_j$'s, $t_i$'s, $y_0$ and $z$. From Eq.~(\ref{feb18-15}),
making $y_i=x_i$ for all
$i$, $y_0=1/g_1(x)\cdots g_s(x)$, $t_i=f_i(x)/g_i(x)$ for all $i$, and
$z=1$, we get that $f_k(x)/g_k(x)=0$. Thus $P=0$. 

(b) $\Rightarrow$ (a)): Setting $I=(\{g_it_i-f_iz\}_{i=1}^s,\{y_i^q-y_i\}_{i=1}^n,
y_0g_1\cdots g_s-1)$, by Lemma~\ref{empty-affine} one has that $I\cap
S\subsetneq S$. Thus it suffices to show that $t_k\in I\cap S$ for all
$k$. Notice that $g_1\cdots g_sf_k$ vanishes at all points of
$\mathbb{A}^s$ because $\mathbb{X}^*=\{0\}$. Hence, thanks to
Proposition~\ref{oct14-14}, $g_1\cdots g_sf_k$
is in $(\{y_i^q-y_i\}_{i=1}^n)$. Setting $w=y_0g_1\cdots
g_s-1$, and applying Lemma~\ref{may16-14} with $f=t_k$, we can write
\begin{equation*}
(w+1)^{2}t_k=\sum_{i=1}^sy_0^{2}g_1\cdots g_sh_i(g_it_i-f_iz)+
y_0^{2}g_1^{2}\cdots
g_s^{2}z(f_k/g_k).
\end{equation*}
Therefore $(w+1)^2t_k\in I$. Thus $t_k\in I\cap S$.  
\end{proof}

\begin{lemma}\label{jan27-15-1} If $I=(\{g_it_i-f_iz\}_{i=1}^s,\{y_i^q-y_i\}_{i=1}^n,
y_0g_1\cdots g_s-1)$ and $\mathfrak{m}=(t_1,\ldots,t_s)$ is the
irrelevant maximal ideal of $S$, then 
\begin{itemize}
\item[\rm(a)] $I\cap S$ is a graded ideal of $S$, and 
\item[\rm(b)] $\mathbb{X}\neq\emptyset$ if and only if
$I\cap S\subsetneq\mathfrak{m}$.
\end{itemize}
\end{lemma}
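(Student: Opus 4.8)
The plan is to handle the two parts separately, using the preceding lemmas as the main tools. For part (a), the strategy is to introduce the new variable $z$ and observe that $I$ is \emph{homogeneous} with respect to a suitable grading on $B=K[y_0,\ldots,y_n,z,t_1,\ldots,t_s]$ that is compatible with the standard grading on $S$: assign degree $1$ to each $t_i$ and to $z$, and degree $0$ to each $y_j$ (including $y_0$). Under this grading the binomials $g_it_i-f_iz$ are homogeneous of degree $1$, the polynomials $y_j^q-y_j$ are homogeneous of degree $0$, and $y_0g_1\cdots g_s-1$ is homogeneous of degree $0$; hence $I$ is a graded ideal of $B$. Contracting to $S=K[t_1,\ldots,t_s]$, where this grading restricts to the standard one, preserves gradedness: if $f=\sum_d f_d\in I\cap S$ with $f_d\in S_d$, then each $f_d$ lies in $I$ (by the graded structure on $B$, via Proposition~\ref{jan27-15} applied in $B$) and clearly in $S$, so $f_d\in I\cap S$. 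Thus $I\cap S$ is graded, and by Proposition~\ref{jan27-15} we are done with (a).

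For part (b), the direction to prove carefully is the characterization of nonemptiness. First suppose $\mathbb{X}\neq\emptyset$. Then there is $x\in K^n$ with $g_i(x)\neq 0$ for all $i$ and $f_k(x)\neq 0$ for some $k$; I will show $t_k\notin I\cap S$, which gives $I\cap S\subsetneq\mathfrak{m}$ since $I\cap S\subseteq\mathfrak{m}$ always holds (every element of $I\cap S$ that is a polynomial in the $t_i$ alone must have zero constant term: setting all $t_i=z=0$ and $y_0=1$ in a representation of such an element kills all generators except $y_0g_1\cdots g_s-1$, and specializing the $y_j$ appropriately — or simply noting $I\cap S$ is graded with no degree-$0$ part by Lemma~\ref{empty-affine}(c) — forces the constant term to vanish). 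To see $t_k\notin I\cap S$, suppose for contradiction $t_k=\sum_i a_i(g_it_i-f_iz)+\sum_j b_j(y_j^q-y_j)+h(y_0g_1\cdots g_s-1)$; substituting $y_j=x_j$, $z=1$, $y_0=1/(g_1(x)\cdots g_s(x))$, and $t_i=f_i(x)/g_i(x)$ for all $i$ yields $f_k(x)/g_k(x)=0$, contradicting $f_k(x)\neq 0$. Conversely, suppose $\mathbb{X}=\emptyset$; I will show $\mathfrak{m}\subseteq I\cap S$, i.e.\ $t_k\in I\cap S$ for each $k$. Since $\mathbb{X}=\emptyset$, for every $x\in K^n$ with $g_1(x)\cdots g_s(x)\neq 0$ we have $f_i(x)=0$ for all $i$; hence $g_1\cdots g_s f_k$ vanishes at every point of $K^n$, so by Proposition~\ref{oct14-14} it lies in $(\{y_j^q-y_j\}_{j=1}^n)$. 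Now apply Lemma~\ref{may16-14} with $f=t_k$ (homogeneous of degree $1$), together with the substitution $w=y_0g_1\cdots g_s-1$ exactly as in the proof of Lemma~\ref{zero-affine}: one obtains $(w+1)^2 t_k=\sum_i y_0^2 g_1\cdots g_s h_i(g_it_i-f_iz)+y_0^2 g_1^2\cdots g_s^2 z (f_k/g_k)$, and since $g_1\cdots g_s f_k\in(\{y_j^q-y_j\})$ the last term lies in $I$ as well; therefore $(w+1)^2 t_k\in I$, and as $w+1$ is a unit modulo $I$ (indeed $y_0g_1\cdots g_s\equiv 1$), we get $t_k\in I$, hence $t_k\in I\cap S$.

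The main obstacle is part (b): one must be careful that the case analysis ``$\mathbb{X}\neq\emptyset$'' is genuinely about the \emph{projective} set, so the witness point $x$ need only make \emph{some} $f_k(x)$ nonzero rather than all of them, and the algebraic manipulation must be set up so that this single nonvanishing is enough to obstruct $t_k\in I\cap S$. In the converse direction the subtlety is that $\mathbb{X}=\emptyset$ does \emph{not} imply $\mathbb{X}^*=\emptyset$ — it only forces $\mathbb{X}^*\subseteq\{0\}$ — so one cannot invoke Lemma~\ref{empty-affine} directly; instead the argument must reproduce the relevant part of Lemma~\ref{zero-affine}, extracting just the inclusion $\mathfrak{m}\subseteq I\cap S$ from the vanishing of $g_1\cdots g_s f_k$ on all of $K^n$. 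The gradedness in part (a) is routine once the $z$-variable weighting is identified, and I expect no difficulty there.
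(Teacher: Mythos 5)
Your proof is correct and follows essentially the same route as the paper: your grading on $B$ with $\deg t_i=\deg z=1$ and $\deg y_j=0$ is exactly what the paper's substitution $t_i\mapsto t_iv$, $z\mapsto zv$ implements for part (a), and your part (b) reproduces verbatim the computations that the paper simply cites from Lemmas~\ref{empty-affine} and \ref{zero-affine} (the substitution argument showing $t_k\notin I$ is the content of Lemma~\ref{zero-affine}(a)$\Rightarrow$(b), and the $(w+1)^2t_k$ computation is its converse). The only stylistic difference is that you inline those two lemmas rather than invoking them.
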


\begin{proof} (a): We set $B=K[y_0,y_1,\ldots,y_n,z,t_1,\ldots,t_s]$.
Take $0\neq f\in I\cap S$ and write
it as $f=f_1+\cdots+f_r$, where $f_i$ is a homogeneous polynomial of
degree $d_i$ and $d_1<\cdots<d_r$. By induction, using
Proposition~\ref{jan27-15}, it suffices to show
that $f_r\in I\cap S$. We can write 
$$
f=\sum_{i=1}^sa_i(g_it_i-f_iz)+\sum_{i=1}^nc_i(y_i^q-y_i)+c(y_0g_1\cdots
g_s-1),
$$
where the $a_i$'s, $c_i$'s, and $c$ are in $B$. 
Making the substitution $t_i\rightarrow t_iv$, $z\rightarrow zv$, with
$v$ an extra variable, and regarding
$f(t_1v,\ldots,t_sv)$ as a polynomial in $v$ it follows readily that
$v^{d_r}f_r$ is in the ideal generated by
$\mathcal{B}=\{g_it_iv-f_izv\}_{i=1}^s\cup\{y_i^q-y_i\}_{i=1}^n
\cup\{y_0g_1\cdots g_s-1\}$. Writing $v^{d_r}f_r$  as a linear
combination of $\mathcal{B}$, with coefficients in $B$, and making
$v=1$, 
we get that 
$f_r\in I\cap S$.

(b): $\Rightarrow$) If $\mathbb{X}\neq\emptyset$, by
Lemma~\ref{empty-affine}, we get that $I\cap S\neq S$. By part (a) the
ideal $I\cap S$ is graded. Hence $I\cap S\subsetneq\mathfrak{m}$. 

$\Leftarrow$) If $I\cap S\subsetneq\mathfrak{m}$, by 
Lemmas~\ref{empty-affine} and \ref{zero-affine}, we get 
$\mathbb{X}^*\neq\emptyset$ and $\mathbb{X}^*\neq\{0\}$. Thus
$\mathbb{X}\neq\emptyset$.   
\end{proof}

We come to the main result of this paper.

\begin{theorem}\label{puebla-cinvestav-2-rat-finite}
Let $B=K[y_0,y_1,\ldots,y_n,z,t_1,\ldots,t_s]$ be a polynomial ring
over a finite field $K=\mathbb{F}_q$. If $\mathbb{X}$ is a projective set  
parameterized by rational functions $f_1/g_1,\ldots,f_s/g_s$ in
$K(\mathbf{y})$ and $\mathbb{X}\neq\emptyset$,  
then 
$$
I(\mathbb{X})=(\{g_it_i-f_iz\}_{i=1}^s,\{y_i^q-y_i\}_{i=1}^n,
y_0g_1\cdots g_s-1)\cap S.
$$
\end{theorem}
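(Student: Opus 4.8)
The plan is to prove the two inclusions separately, writing $I = (\{g_it_i-f_iz\}_{i=1}^s,\{y_i^q-y_i\}_{i=1}^n, y_0g_1\cdots g_s-1)$ and $\mathfrak{m}=(t_1,\ldots,t_s)\subset S$. By Lemma~\ref{jan27-15-1}(a) the ideal $I\cap S$ is graded, so it suffices to compare homogeneous pieces, and since $\mathbb{X}\neq\emptyset$ we know from Lemma~\ref{jan27-15-1}(b) that $I\cap S\subsetneq\mathfrak{m}$.

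For the inclusion $I\cap S\subseteq I(\mathbb{X})$, I would take a homogeneous polynomial $f\in I\cap S$ of degree $d$ and show it vanishes on every point of $\mathbb{X}$. Given $[P]\in\mathbb{X}$ with $P=(f_1(x)/g_1(x),\ldots,f_s(x)/g_s(x))$ for some $x\in K^n$ with $g_i(x)\neq0$ for all $i$, write $f=\sum_{i=1}^s a_i(g_it_i-f_iz)+\sum_{i=1}^n c_i(y_i^q-y_i)+c(y_0g_1\cdots g_s-1)$ with coefficients in $B$, and then substitute $y_i=x_i$, $y_0=1/(g_1(x)\cdots g_s(x))$, $t_i=f_i(x)/g_i(x)$, and $z=1$. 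All three families of generators vanish under this substitution, so $f(P)=0$; since $f$ is homogeneous this gives $f([P])=0$, hence $f\in I(\mathbb{X})$.

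For the reverse inclusion $I(\mathbb{X})\subseteq I\cap S$, I would take a homogeneous $f\in I(\mathbb{X})$ of degree $d$ and use Lemma~\ref{may16-14} in its homogeneous form, with the substitution trick of Lemma~\ref{zero-affine}: setting $w=y_0g_1\cdots g_s-1$ and applying Lemma~\ref{may16-14} to get
$$
(w+1)^{d+1}f = \sum_{i=1}^s y_0^{d+1}(g_1\cdots g_s)h_i(g_it_i-f_iz) + y_0^{d+1}g_1^{d+1}\cdots g_s^{d+1}z^d f(f_1/g_1,\ldots,f_s/g_s).
$$
The first sum lies in $I$. For the second term, the point is that $g_1^{d+1}\cdots g_s^{d+1}f(f_1/g_1,\ldots,f_s/g_s)$ is a genuine polynomial in $K[y_1,\ldots,y_n]$ (clearing denominators) which, because $f$ vanishes on $\mathbb{X}$, vanishes at every $x\in K^n$ with $g_i(x)\neq0$ — hence $g_1\cdots g_s$ times it vanishes at \emph{all} of $K^n$, so by Proposition~\ref{oct14-14} it lies in $(\{y_i^q-y_i\}_{i=1}^n)$; a further multiplication by a high power of $g_1\cdots g_s = w+1$ folds it into $I$ using $(b)\Rightarrow(c)$-style manipulation (as in Lemma~\ref{zero-affine}). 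One concludes $(w+1)^N f\in I$ for suitable $N$, and since $w+1\equiv$ unit modulo the generator $w$, actually $f\in I$; as $f\in S$ this gives $f\in I\cap S$.

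The main obstacle is the bookkeeping in the reverse inclusion: one must be careful that "vanishes on $\mathbb{X}$" for the homogeneous polynomial $f$ really does force $g_1\cdots g_s\cdot\big(g_1^{d+1}\cdots g_s^{d+1}f(f_1/g_1,\ldots,f_s/g_s)\big)$ to vanish on \emph{all} of $K^n$ (including the locus where some $g_i$ vanishes, which is handled by the extra $g_1\cdots g_s$ factor), and then that multiplying through by enough powers of $g_1\cdots g_s$ and rewriting via $y_0g_1\cdots g_s-1$ legitimately lands the term in $I$ rather than merely in its radical. The identity $(w+1)^{d+1}f\in I\Rightarrow f\in I$ needs a brief justification: expand $(w+1)^{d+1}=1+w\cdot(\text{something})$ and absorb the $w$-multiple into the generator $y_0g_1\cdots g_s-1$ of $I$. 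Everything else is a direct substitution argument parallel to Lemmas~\ref{zero-affine} and \ref{jan27-15-1}.
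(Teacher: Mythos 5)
Your proposal is correct and follows the same skeleton as the paper's proof: both inclusions are handled exactly as you describe, with the easy inclusion $I\cap S\subseteq I(\mathbb{X})$ done by direct substitution into a representation of $f$ in terms of the generators, and the hard inclusion driven by the identity of Lemma~\ref{may16-14} applied to $(w+1)^{d+1}f$ with $w=y_0g_1\cdots g_s-1$. The one genuine divergence is at the key step: the paper divides $H=g_1^{d+1}\cdots g_s^{d+1}f(f_1/g_1,\ldots,f_s/g_s)$ by the field equations $y_i^q-y_i$, shows by a three-case substitution analysis that the remainder $G$ vanishes on all of $K^n$, and kills $G$ with the Combinatorial Nullstellensatz (Lemma~\ref{dec12-11}); you instead observe that $g_1\cdots g_s\cdot H$ vanishes on all of $K^n$ and invoke Proposition~\ref{oct14-14} directly, at the cost of one extra multiplication of the identity by $w+1=y_0g_1\cdots g_s$ before concluding $(w+1)^{N}f\in I$ and hence $f\in I$ since $w\in I$. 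The two routes are essentially equivalent --- Proposition~\ref{oct14-14} is itself proved by the division-plus-Nullstellensatz argument --- but yours packages the case analysis more economically (the locus where some $g_i$ vanishes is absorbed by the extra factor rather than treated as a separate case), while the paper's version lands $(w+1)^{d+1}f$ in $I$ without the additional multiplication. Two small points to tighten: the phrase ``$g_1\cdots g_s=w+1$'' is a slip (in fact $w+1=y_0g_1\cdots g_s$, which is what supplies the needed factor of $g_1\cdots g_s$ when you multiply through); and when all $g_i(x)\neq 0$ but every $f_i(x)=0$, the point $[P]$ is not in $\mathbb{X}$ (indeed not a projective point at all), so the vanishing of $H(x)$ there must be justified by the homogeneity of $f$ in positive degree rather than by $f\in I(\mathbb{X})$ --- this is the paper's Case~(II).
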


\begin{proof} We set
$I=(\{g_it_i-f_iz\}_{i=1}^s,\{y_i^q-y_i\}_{i=1}^n, 
y_0g_1\cdots g_s-1)$. First we show the
inclusion $I(\mathbb{X})\subset I\cap S$.  
Take a homogeneous polynomial $f=f(t_1,\ldots,t_s)$ of degree $d$
that vanishes at all points of $\mathbb{X}$. Setting $w=y_0g_1\cdots
g_s-1$, by Lemma~\ref{may16-14}, we can write
\begin{equation}\label{may20-14-finite}
(w+1)^{d+1}f=\sum_{i=1}^sy_0^{d+1}g_1\cdots
g_sa_i(g_it_i-f_iz)+z^dy_0^{d+1}g_1^{d+1}\cdots 
g_s^{d+1}f(f_1/g_1,\ldots,f_s/g_s),
\end{equation}
where $a_1,\ldots,a_s$ are in $B$. We set 
$H=g_1^{d+1}\cdots g_s^{d+1}f(f_1/g_1,\ldots,f_s/g_s)$. This is a
polynomial in $K[\mathbf{y}]$. Thus, by the division algorithm
in $K[\mathbf{y}]$ (see \cite[Theorem~3, p.~63]{CLO}), we can
write 
\begin{equation}\label{23-jul-10-1}
H=H(y_1,\ldots,y_n)=\sum_{i=1}^nh_i(y_i^{q}-y_i)+G(y_1,\ldots,y_n)
\end{equation}
for some $h_1,\ldots,h_n$ in $K[\mathbf{y}]$, 
where the monomials that occur in $G=G(y_1,\ldots,y_n)$ are not divisible by 
any of the monomials $y_1^{q},\ldots,y_n^{q}$, i.e.,
$\deg_{y_i}(G)<q$ for $i=1,\ldots,n$.
Therefore, using Eqs.~(\ref{may20-14-finite}) and (\ref{23-jul-10-1}), we
obtain the equality
\begin{equation}\label{23-jul-10-2}
(w+1)^{d+1}f=\sum_{i=1}^sy_0^{d+1}g_1\cdots
g_sa_i(g_it_i-f_iz)+z^dy_0^{d+1}\sum_{i=1}^nh_i(y_i^{q}-y_i)+
z^dy_0^{d+1}G(y_1,\ldots,y_n).
\end{equation}
Thus to show that $f\in I\cap S$ we need only show that $G=0$. We
claim that $G$ vanishes on $K^n$. Notice that $y_i^q-y_i$ vanishes at
all points of $K^n$ because $(K^*,\,\cdot\, )$ is a group of order $q-1$. 
Take an arbitrary sequence $x_1,\ldots,x_n$
of elements of $K$, i.e., $x=(x_i)\in K^n$. 

Case (I): $g_i(x)=0$ for some $i$. Making $y_j=x_j$ for all $j$ in
Eq.~(\ref{23-jul-10-2}) we get $G(x)=0$.

Case (II): $f_i(x)=0$ and $g_i(x)\neq 0$ for
all $i$. Making $y_k=x_k$ and 
$t_j=f_j(x)/g_j(x)$ for all $k,j$ in Eq.~(\ref{23-jul-10-2}) and
using 
that $f$ is homogeneous, we obtain that $G(x)=0$.

Case (III): $f_i(x)\neq 0$ for some $i$ and
$g_\ell(x)\neq 0$ for all $\ell$. Making $y_k=x_k$,
$t_j=f_j(x)/g_j(x)$ and $z=1$ in  Eq.~(\ref{23-jul-10-2}) and
using that $f$ vanishes on $[(f_1(x)/g_1(x),\ldots,f_s(x)/g_s(x))]$,
we get that $G(x)=0$. 
This completes the proof of the 
claim. 

Therefore $G$ vanishes at all points of $K^n$ and $\deg_{y_i}(G)<q$ 
for all $i$. Hence, by Lemma~\ref{dec12-11}, we get that $G=0$.  

Next we show the inclusion $I(\mathbb{X})\supset I\cap S$. By 
Lemma~\ref{jan27-15-1} the ideal $I\cap S$ is graded. 
Let $f$ be a homogeneous polynomial of
$I\cap S$. Take a point $[P]$ in $\mathbb{X}$ with
$P=(f_1(x)/g_1(x),\ldots,f_s(x)/g_s(x))$. Writing $f$ as a linear
combination of $\{g_it_i-f_iz\}_{i=1}^s,\{y_i^q-y_i\}_{i=1}^n, 
y_0g_1\cdots g_s-1)$, with coefficients in $K$, and making $t_i=f_i(x)/g_i(x)$, $y_j=x_j$, $z=1$ and
$y_0=1/g_1(x)\cdots g_s(x)$ for all $i,j$ it follows that $f(P)=0$.
Thus $f$ vanishes on $\mathbb{X}$. 
 \end{proof}

\begin{definition}\label{colon-def}\rm 
If $I\subset S$ is an ideal and $h\in S$, we set $(I\colon h):=\{f\in
S\vert\, fh\in I\}$. This ideal is called the {\it colon ideal} of $I$
with respect to $h$. 
\end{definition}

\begin{definition} The {\it projective algebraic set parameterized\/}
by $F$, denoted by $X$, is the set of all points 
$[(f_1(x)/g_1(x),\ldots,f_s(x)/g_s(x))]$ in $\mathbb{P}^{s-1}$ such 
that $x\in K^n$ and $f_i(x)g_i(x)\neq 0$ for all $i$. 
\end{definition}

The ideal $I(X)$ can be computed from $I(\mathbb{X})$ using the colon
operation.

\begin{proposition}\label{jul30-14} If $X\neq\emptyset$, then $(I(\mathbb{X})\colon
t_1\cdots t_s)=I(X)$.
\end{proposition}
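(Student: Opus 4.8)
The plan is to prove the two inclusions $(I(\mathbb{X})\colon t_1\cdots t_s)\subseteq I(X)$ and $I(X)\subseteq (I(\mathbb{X})\colon t_1\cdots t_s)$ separately, working with homogeneous polynomials since all ideals in sight are graded (note $I(\mathbb{X})$ is graded by Theorem~\ref{puebla-cinvestav-2-rat-finite}, hence so is the colon ideal, as $t_1\cdots t_s$ is homogeneous). First I would record the key geometric fact relating the two parameterized sets: a point $[(f_1(x)/g_1(x),\ldots,f_s(x)/g_s(x))]$ of $\mathbb{X}$ lies in $X$ precisely when moreover $f_i(x)\neq 0$ for all $i$, equivalently when none of the homogeneous coordinates vanishes. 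So $X$ is exactly the set of points of $\mathbb{X}$ lying off the coordinate hyperplanes $\{t_i=0\}$, i.e. $X = \mathbb{X}\setminus \bigcup_{i=1}^s \{[\alpha]\in\mathbb{X} : \alpha_i = 0\}$.

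For the inclusion $(I(\mathbb{X})\colon t_1\cdots t_s)\subseteq I(X)$: take a homogeneous $f$ with $f\,t_1\cdots t_s\in I(\mathbb{X})$, and let $[P]\in X$ with $P=(f_1(x)/g_1(x),\ldots,f_s(x)/g_s(x))$, $f_i(x)g_i(x)\neq 0$ for all $i$. Since $[P]\in X\subseteq\mathbb{X}$, the product $f\,t_1\cdots t_s$ vanishes at $[P]$, so $f(P)\prod_{i=1}^s (f_i(x)/g_i(x))=0$; but each factor $f_i(x)/g_i(x)$ is nonzero by the definition of $X$, hence $f(P)=0$. As $[P]$ was an arbitrary point of $X$, we get $f\in I(X)$.

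For the reverse inclusion $I(X)\subseteq(I(\mathbb{X})\colon t_1\cdots t_s)$: take a homogeneous $f\in I(X)$ of degree $d$; I must show $f\,t_1\cdots t_s$ vanishes at every point of $\mathbb{X}$. Let $[P]\in\mathbb{X}$ with $P=(f_1(x)/g_1(x),\ldots,f_s(x)/g_s(x))$, where $g_i(x)\neq 0$ for all $i$ and not all $f_i(x)$ are zero. If some $f_j(x)=0$, then the $j$-th coordinate of $P$ is $0$, so the monomial $t_1\cdots t_s$ vanishes at $[P]$ and hence so does $f\,t_1\cdots t_s$. If instead $f_i(x)\neq 0$ for all $i$, then $f_i(x)g_i(x)\neq 0$ for all $i$, so $[P]\in X$ and therefore $f(P)=0$, again giving $(f\,t_1\cdots t_s)([P])=0$. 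In both cases the product vanishes at $[P]$, so $f\,t_1\cdots t_s\in I(\mathbb{X})$, i.e. $f\in(I(\mathbb{X})\colon t_1\cdots t_s)$.

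The argument is essentially a case analysis on which homogeneous coordinates of a point of $\mathbb{X}$ vanish, so there is no serious obstacle; the only point requiring a little care is making sure one is allowed to reduce to homogeneous $f$ on the colon side — this is where one invokes that $I(\mathbb{X})$ and $t_1\cdots t_s$ are homogeneous, so $(I(\mathbb{X})\colon t_1\cdots t_s)$ is a graded ideal (Proposition~\ref{jan27-15}) and it suffices to test membership on homogeneous elements. The hypothesis $X\neq\emptyset$ is needed only to make the statement $I(X)=S$ fail, i.e. so that both sides are genuine proper graded ideals and the equality is not vacuous; it plays no role in the two inclusions beyond this.
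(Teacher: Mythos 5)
Your proof is correct and follows essentially the same route as the paper: the reverse inclusion is the identical case analysis on whether a point of $\mathbb{X}$ has a vanishing coordinate (the paper merely phrases it through the primary decomposition $I(\mathbb{X})=\bigcap_{[Q]\in\mathbb{X}}I_{[Q]}$), and your direct evaluation argument for the forward inclusion is just an unpacking of the paper's observation that $t_1\cdots t_s$ is a nonzerodivisor on $S/I(X)$. No gaps; your remarks on gradedness and on the role of $X\neq\emptyset$ are accurate.
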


\begin{proof} 
Since $X\subset\mathbb{X}$, one has
$I(\mathbb{X})\subset I(X)$. Consequently 
$(I(\mathbb{X})\colon t_1\cdots t_s)\subset I(X)$ because $t_i$ is not
a zero-divisor of $S/I(X)$ for all $i$.  
To show the
reverse inclusion take a homogeneous
polynomial $f$ in $I(X)$. Let
$[P]$ be a point in $\mathbb{X}$, with $P=(\alpha_1,\ldots,\alpha_s)$
and $\alpha_k\neq 0$ for some $k$, and let
$I_{[P]}$ be the 
ideal generated by the homogeneous polynomials of $S$ that vanish 
at $[P]$. Then $I_{[P]}$ is a prime ideal of height $s-1$, 
\begin{equation}\label{primdec-ix}
I_{[P]}=(\{\alpha_kt_i-\alpha_it_k\vert\, k\neq i\in\{1,\ldots,s\}),\
I(\mathbb{X})=\bigcap_{[Q]\in \mathbb{X}}I_{[Q]},
\end{equation}
and the latter is the primary decomposition of $I(\mathbb{X})$.
Noticing that $t_i\in I_{[P]}$ if and only if $\alpha_i=0$, 
it follows that $t_1\cdots t_sf\in I(\mathbb{X})$. Indeed if
$[P]$ has at least one entry equal to zero, then $t_1\cdots t_s\in
I_{[P]}$ and if all entries of $P$ are not zero, then $f\in
I(X)\subset I_{[P]}$.
In either case $t_1\cdots t_sf\in I(\mathbb{X})$. Hence
$f\in(I(\mathbb{X})\colon t_1\cdots t_s)$. 
\end{proof}

Next we present some other means to compute the vanishing ideal $I(X)$.

\begin{theorem}\label{puebla-cinvestav-2-rat-finite-rest}
Let $B=K[y_0,w,y_1,\ldots,y_n,z,t_1,\ldots,t_s]$ be a polynomial ring
over $K=\mathbb{F}_q$. If $X$ is a projective
algebraic set parameterized by $f_1/g_1,\ldots,f_s/g_s$ in
$K(\mathbf{y})$ and $X\neq\emptyset$, then 
\begin{eqnarray*}
I(X)&=&(\{g_it_i-f_iz\}_{i=1}^s,\{y_i^q-y_i\}_{i=1}^n,
y_0g_1\cdots g_s-1,wf_1\cdots f_s-1)\cap S\\
&=&(\{g_it_i-f_iz\}_{i=1}^s,\{y_i^q-y_i\}_{i=1}^n,
\{f_i^{q-1}-1\}_{i=1}^s,y_0g_1\cdots g_s-1)\cap S.
\end{eqnarray*}
\end{theorem}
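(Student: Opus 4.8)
The plan is to mimic the structure of the proof of Theorem~\ref{puebla-cinvestav-2-rat-finite}, adding the extra generators that force all the $f_i$ to be nonzero at the point coming from $\mathbf{y}$. Write $J_1=(\{g_it_i-f_iz\}_{i=1}^s,\{y_i^q-y_i\}_{i=1}^n,y_0g_1\cdots g_s-1,wf_1\cdots f_s-1)$ and $J_2=(\{g_it_i-f_iz\}_{i=1}^s,\{y_i^q-y_i\}_{i=1}^n,\{f_i^{q-1}-1\}_{i=1}^s,y_0g_1\cdots g_s-1)$. I would first observe that $J_1$ and $J_2$ in fact define the same restriction to $S$: modulo $(\{y_i^q-y_i\}_{i=1}^n)$ one has $f_i^{q-1}-1$ vanishing exactly where all $f_i(x)\neq 0$ over $K^n$ (since $t\mapsto t^{q-1}$ is $1$ on $K^*$ and $0$ at $0$, and $f_i^{q}-f_i\in(\{y_j^q-y_j\}_j)$), and $f_1\cdots f_s$ is a unit on that same locus; the argument that these two localizations agree is the same interplay between ``adjoin an inverse'' and ``adjoin $t^{q-1}-1$'' used implicitly elsewhere, so it suffices to prove the statement for, say, $J_2\cap S$. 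So the real content is $I(X)=J_2\cap S$.

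For the inclusion $J_2\cap S\subset I(X)$, I would argue exactly as in the last paragraph of the proof of Theorem~\ref{puebla-cinvestav-2-rat-finite}: first check (the analogue of Lemma~\ref{jan27-15-1}(a), via the $t_i\to t_iv$, $z\to zv$ homogenization trick) that $J_2\cap S$ is graded, then take a homogeneous $f\in J_2\cap S$ and a point $[P]\in X$ with $P=(f_1(x)/g_1(x),\dots,f_s(x)/g_s(x))$ where $f_i(x)g_i(x)\neq 0$ for all $i$; writing $f$ as a $B$-combination of the generators of $J_2$ and evaluating at $y_j=x_j$, $t_i=f_i(x)/g_i(x)$, $z=1$, $y_0=1/\big(g_1(x)\cdots g_s(x)\big)$ kills $g_it_i-f_iz$ and $y_0g_1\cdots g_s-1$ and each $y_i^q-y_i$, while $f_i(x)^{q-1}-1=0$ since $f_i(x)\in K^*$, hence $f(P)=0$. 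Thus $f\in I(X)$.

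For the reverse inclusion $I(X)\subset J_2\cap S$, I would follow the proof of Theorem~\ref{puebla-cinvestav-2-rat-finite} for the inclusion $I(\mathbb{X})\subset I\cap S$ but with the role of $I(X)$ in place of $I(\mathbb X)$. Take homogeneous $f\in I(X)$ of degree $d$; by Lemma~\ref{may16-14} and the division algorithm in $K[\mathbf y]$ as in Eqs.~(\ref{may20-14-finite})--(\ref{23-jul-10-2}), one gets
\[
(w+1)^{d+1}f=\sum_{i=1}^sy_0^{d+1}g_1\cdots g_sa_i(g_it_i-f_iz)+z^dy_0^{d+1}\sum_{i=1}^nh_i(y_i^q-y_i)+z^dy_0^{d+1}G(y_1,\dots,y_n)
\]
with $\deg_{y_i}G<q$. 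Then I want $G\in J_2\cap S$, which in the previous theorem meant $G=0$; here it is weaker because $G$ need only vanish where all $f_i\neq 0$. The fix is to multiply through: $(f_1\cdots f_s)^{q-1}-1\in J_2$, so modulo $J_2$ one has $G\equiv G\cdot(f_1\cdots f_s)^{q-1}$; the new polynomial $\widetilde G:=G\cdot(f_1\cdots f_s)^{q-1}$ vanishes at every $x\in K^n$ — on the locus where some $f_i(x)=0$ the factor $(f_1\cdots f_s)(x)^{q-1}$ vanishes, and on the locus where all $f_i(x)\neq 0$ the point $[(f_i(x)/g_i(x))]$ lies in $X$ (after also handling $g_i(x)=0$ as Case (I), and noting $X\neq\emptyset$) so $f$, hence $G$, vanishes there by the same three-case evaluation analysis as in the original proof. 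One cannot immediately invoke Lemma~\ref{dec12-11} because $\deg_{y_i}\widetilde G$ may reach $q-1+\deg_{y_i}((f_1\cdots f_s)^{q-1})$; so instead reduce $\widetilde G$ modulo $(\{y_i^q-y_i\}_i)$ to a polynomial $G'$ with $\deg_{y_i}G'<q$ that still vanishes on $K^n$, whence $G'=0$ by Lemma~\ref{dec12-11}, i.e. $\widetilde G\in(\{y_i^q-y_i\}_i)\subset J_2$. Combining, $z^dy_0^{d+1}G\in J_2$ and $z^dy_0^{d+1}((f_1\cdots f_s)^{q-1}-1)G\in J_2$, so $z^dy_0^{d+1}G\in J_2$ as well; plugging back, $(w+1)^{d+1}f\in J_2$, and since $w+1=y_0g_1\cdots g_s$ is a unit modulo $J_2$ (indeed $y_0g_1\cdots g_s-1\in J_2$) we conclude $f\in J_2$, hence $f\in J_2\cap S$.

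The main obstacle I anticipate is the bookkeeping around $G$ versus $\widetilde G=G\cdot(f_1\cdots f_s)^{q-1}$: one must be careful that multiplying by $(f_1\cdots f_s)^{q-1}$ is legitimate modulo $J_2$ (it is, since $(f_1\cdots f_s)^{q-1}\equiv 1$), that the degree bound needed for Lemma~\ref{dec12-11} is restored by a further reduction modulo the $y_i^q-y_i$, and that the three-case evaluation argument still applies verbatim to $f\in I(X)$ rather than $f\in I(\mathbb X)$ — here the hypothesis $X\neq\emptyset$ is what guarantees there is at least one point to evaluate at, and Case (III) uses precisely that $f$ vanishes on $X$. Everything else is a routine adaptation of the arguments already given for Theorem~\ref{puebla-cinvestav-2-rat-finite} and Lemmas~\ref{empty-affine}--\ref{jan27-15-1}.
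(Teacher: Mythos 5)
Your proposal is correct and follows exactly the route the paper intends: the paper's own ``proof'' of this theorem is only the remark that it follows by adapting the proof of Theorem~\ref{puebla-cinvestav-2-rat-finite}, and your write-up supplies that adaptation, the one genuinely new step being the replacement of $G$ by $G\cdot(f_1\cdots f_s)^{q-1}$ (which is congruent to $G$ modulo $J_2$ and vanishes on all of $K^n$) so that the reduction modulo $(\{y_i^q-y_i\}_i)$ and Lemma~\ref{dec12-11} still apply. The auxiliary facts you invoke to identify the two elimination ideals --- that $f_i^{q-1}-1\in J_1$ because $f_i^q-f_i\in(\{y_j^q-y_j\}_j)$ while $f_i$ is a unit modulo $J_1$, and that $w$ can be eliminated by the Rabinowitsch trick since $f_1\cdots f_s$ is already a unit modulo $J_2$ --- are routine and correctly sketched.
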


\begin{proof} This follows adapting the proof of
Theorem~\ref{puebla-cinvestav-2-rat-finite}.
\end{proof}

\begin{theorem}\label{puebla-cinvestav-2-rat-finite-no-rest}
Let $B=K[y_0,y_1,\ldots,y_n,t_1,\ldots,t_s]$ be a polynomial ring
over $K=\mathbb{F}_q$. If $\mathbb{X}^*$ is an affine set  
parameterized by $f_1/g_1,\ldots,f_s/g_s$ in
$K(\mathbf{y})$, then 
$$
I(\mathbb{X}^*)=(\{g_it_i-f_i\}_{i=1}^s,\{y_i^q-y_i\}_{i=1}^n,
y_0g_1\cdots g_s-1)\cap S.
$$
\end{theorem}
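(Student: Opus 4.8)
The plan is to mirror the proof of Theorem~\ref{puebla-cinvestav-2-rat-finite}, adapting every argument by dropping the extra homogenizing variable $z$ and working with the affine analogue of Lemma~\ref{may16-14} rather than its homogeneous counterpart. Set $I=(\{g_it_i-f_i\}_{i=1}^s,\{y_i^q-y_i\}_{i=1}^n,y_0g_1\cdots g_s-1)$ inside $B=K[y_0,y_1,\ldots,y_n,t_1,\ldots,t_s]$; unlike the projective case there is no emptiness hypothesis to worry about, since if $\mathbb{X}^*=\emptyset$ then by (the affine analogue of) Lemma~\ref{empty-affine} one has $I\cap S=S=I(\mathbb{X}^*)$, and otherwise the argument below applies verbatim. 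So I would first note this reduction, then assume $\mathbb{X}^*\neq\emptyset$.

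For the inclusion $I(\mathbb{X}^*)\subset I\cap S$, take $f\in S$ of degree $d$ vanishing on $\mathbb{X}^*$. Put $w=y_0g_1\cdots g_s-1$ and apply the \emph{first} formula of Lemma~\ref{may16-14} (the non-homogeneous one, with $f_i$ in place of $f_iz$) to write
\begin{equation*}
(w+1)^{d+1}f=\sum_{i=1}^s y_0^{d+1}g_1\cdots g_s\,a_i(g_it_i-f_i)+y_0^{d+1}g_1^{d+1}\cdots g_s^{d+1}f(f_1/g_1,\ldots,f_s/g_s),
\end{equation*}
where $H:=g_1^{d+1}\cdots g_s^{d+1}f(f_1/g_1,\ldots,f_s/g_s)\in K[\mathbf{y}]$. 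Then reduce $H$ modulo $\{y_i^q-y_i\}$ by the division algorithm in $K[\mathbf{y}]$ to get $H=\sum_{i=1}^n h_i(y_i^q-y_i)+G$ with $\deg_{y_i}(G)<q$ for all $i$, exactly as in Eq.~\eqref{23-jul-10-1}. Substituting back yields an expression in which $f\in I\cap S$ will follow once $G=0$ is established. To see $G$ vanishes on $K^n$, take $x\in K^n$ and split into the two cases: if $g_i(x)=0$ for some $i$, substitute $y_j=x_j$ to kill the first two sums and conclude $G(x)=0$; if $g_i(x)\neq 0$ for all $i$, substitute $y_j=x_j$ and $t_i=f_i(x)/g_i(x)$, using that $f$ vanishes at the point $(f_1(x)/g_1(x),\ldots,f_s(x)/g_s(x))\in\mathbb{X}^*$, to again get $G(x)=0$. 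Since $y_i^q-y_i$ vanishes on $K^n$, these cover everything; then Lemma~\ref{dec12-11} forces $G=0$.

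For the reverse inclusion $I\cap S\subset I(\mathbb{X}^*)$, the argument is the easy direction: since there is no $z$, the ideal $I\cap S$ need not be graded, but that is irrelevant here — just take any $f\in I\cap S$, write it as a $B$-linear combination of the generators $\{g_it_i-f_i\}$, $\{y_i^q-y_i\}$, $y_0g_1\cdots g_s-1$, and for each point $P=(f_1(x)/g_1(x),\ldots,f_s(x)/g_s(x))\in\mathbb{X}^*$ substitute $t_i=f_i(x)/g_i(x)$, $y_j=x_j$, $y_0=1/(g_1(x)\cdots g_s(x))$; every generator vanishes under this substitution, so $f(P)=0$ and $f\in I(\mathbb{X}^*)$.

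The only genuine point requiring care — the ``main obstacle'' — is making sure that the case analysis for the vanishing of $G$ on $K^n$ really is exhaustive in the affine setting and that no third case (the analogue of Case~(III) in the projective proof, involving $z=1$) is needed: here homogeneity of $f$ is not assumed, so the substitution $t_i=f_i(x)/g_i(x)$ directly gives $f$ evaluated at the affine point, with no rescaling subtlety. I would also double-check that the statement of Lemma~\ref{may16-14} as quoted indeed produces coefficients in $K[y_1,\ldots,y_n,t_1,\ldots,t_s]\subset B$ so that the substitutions are legitimate. Everything else is a routine transcription of the proof of Theorem~\ref{puebla-cinvestav-2-rat-finite} with $z$ deleted.
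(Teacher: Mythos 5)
Your proposal is correct and is precisely the adaptation the paper itself invokes (its proof of this theorem is literally ``adapt the proof of Theorem~\ref{puebla-cinvestav-2-rat-finite}''): you drop $z$, use the first identity of Lemma~\ref{may16-14}, and correctly observe that the affine case needs only two subcases for the vanishing of $G$ and no gradedness argument for the reverse inclusion. The extra remark handling $\mathbb{X}^*=\emptyset$ via the affine analogue of Lemma~\ref{empty-affine} is a sensible addition, since the theorem as stated carries no nonemptiness hypothesis.
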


\begin{proof} This follows adapting the proof of
Theorem~\ref{puebla-cinvestav-2-rat-finite}.
\end{proof}

\begin{definition} The {\it affine algebraic set parameterized\/} by
$F$, denoted $X^*$, is the set of all points
$(f_1(x)/g_1(x),\ldots,f_s(x)/g_s(x))$ in $\mathbb{A}^{s}$ such that $x\in
K^n$ and $f_i(x)g_i(x)\neq 0$ for all $i$. .
\end{definition}

The ideal $I(X^*)$ can be computed from $I(\mathbb{X}^*)$ using the colon
operation.  

\begin{proposition}\label{jul30-14-1} $(I(\mathbb{X}^*)\colon t_1\cdots
t_s)=I(X^*)$.
\end{proposition}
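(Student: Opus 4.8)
The plan is to mimic the proof of Proposition~\ref{jul30-14}, but in the affine setting, where the roles of the vanishing ideals $I(\mathbb{X}^*)$ and $I(X^*)$ are now played by ideals of $S$ that need not be graded. First I would observe the inclusion $\mathbb{X}^*\supset X^*$, which gives $I(\mathbb{X}^*)\subset I(X^*)$. For the inclusion $(I(\mathbb{X}^*)\colon t_1\cdots t_s)\subset I(X^*)$, I would argue that each $t_i$ is a non-zero-divisor on $S/I(X^*)$: indeed $I(X^*)$ is a radical ideal (being the vanishing ideal of a finite set of points in $\mathbb{A}^s$ over $\mathbb{F}_q$), so $S/I(X^*)$ is reduced, and by construction every point of $X^*$ has all coordinates non-zero, so $V(t_i)\cap X^*=\emptyset$, whence $t_i$ lies in no associated (in fact no minimal) prime of $I(X^*)$. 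Hence multiplication by $t_1\cdots t_s$ is injective on $S/I(X^*)$, and $t_1\cdots t_s f\in I(\mathbb{X}^*)\subset I(X^*)$ forces $f\in I(X^*)$.

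For the reverse inclusion $I(X^*)\subset(I(\mathbb{X}^*)\colon t_1\cdots t_s)$, I would take $f\in I(X^*)$ and show $t_1\cdots t_s f\in I(\mathbb{X}^*)$ by checking it vanishes at every point of $\mathbb{X}^*$. Let $P=(f_1(x)/g_1(x),\ldots,f_s(x)/g_s(x))\in\mathbb{X}^*$ with $x\in K^n$ and $g_i(x)\neq 0$ for all $i$. If some coordinate of $P$ is zero, i.e.\ $f_j(x)=0$ for some $j$, then $t_j$ vanishes at $P$, so $t_1\cdots t_s f$ vanishes at $P$. If instead all coordinates of $P$ are non-zero, then $f_i(x)g_i(x)\neq 0$ for all $i$, so $P\in X^*$ and $f(P)=0$, again giving $(t_1\cdots t_s f)(P)=0$. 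Since $\mathbb{X}^*$ is a finite set and $I(\mathbb{X}^*)$ is its vanishing ideal (hence radical and equal to the set of all polynomials vanishing on $\mathbb{X}^*$, not merely the homogeneous ones), we conclude $t_1\cdots t_s f\in I(\mathbb{X}^*)$, so $f\in(I(\mathbb{X}^*)\colon t_1\cdots t_s)$.

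The main obstacle — really the only point requiring care — is that in the affine setting we do not have the clean graded primary decomposition $I(\mathbb{X})=\bigcap_{[Q]}I_{[Q]}$ into linear ideals that the projective proof exploited; instead I would lean on the elementary facts that, over a finite field, a finite set of points $Y\subset\mathbb{A}^s$ has $I(Y)$ equal to its full vanishing ideal and $I(Y)=\bigcap_{P\in Y}\mathfrak{m}_P$ with $\mathfrak{m}_P$ the maximal ideal of $P$, so that $t_i\in\mathfrak{m}_P$ iff the $i$-th coordinate of $P$ is zero. (That $\mathbb{X}^*$ and $X^*$ are finite is automatic since $K^n$ is finite.) With this description the argument is a direct, coordinate-wise check exactly as above; no appeal to Lemma~\ref{may16-14} or the Combinatorial Nullstellensatz is needed here, since Theorem~\ref{puebla-cinvestav-2-rat-finite-no-rest} already identifies $I(\mathbb{X}^*)$ concretely.
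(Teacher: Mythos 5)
Your proof is correct and is precisely the adaptation of the proof of Proposition~\ref{jul30-14} that the paper invokes: you obtain $(I(\mathbb{X}^*)\colon t_1\cdots t_s)\subset I(X^*)$ from the inclusion $I(\mathbb{X}^*)\subset I(X^*)$ together with the fact that each $t_i$ is a non-zero-divisor modulo the radical ideal $I(X^*)=\bigcap_{P\in X^*}\mathfrak{m}_P$, and the reverse inclusion by the same case analysis on whether a point of $\mathbb{X}^*$ has a vanishing coordinate. You also correctly identify the only adjustments the affine setting requires (maximal ideals $\mathfrak{m}_P$ in place of the height-$(s-1)$ primes $I_{[P]}$, and the full rather than graded vanishing ideal), so nothing further is needed.
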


\begin{proof} This follows adapting the proof of
Proposition~\ref{jul30-14}. 
\end{proof}

\begin{corollary}\label{puebla-cinvestav-finite} 
Let $B=K[t_1,\ldots,t_s,y_1,\ldots,y_n,z]$ be 
a polynomial ring over the finite field $K=\mathbb{F}_q$ and let
$f_1,\ldots,f_s$ be polynomials of $R$. The following hold{\rm :}
\begin{enumerate}
\item[\rm(a)]
If $\mathbb{X}\neq\emptyset$, 
then $I(\mathbb{X})=(\{t_i-f_iz\}_{i=1}^s\cup\{y_i^{q}-y_i\}_{i=1}^n)\cap
S$. 
\item[\rm(b)] 
If $X\neq\emptyset$, then $I(X)=
(\{t_i-f_iz\}_{i=1}^s\cup\{y_i^{q}-y_i\}_{i=1}^n\cup\{f_i^{q-1}-1\}_{i=1}^s
)\cap S$. 
\end{enumerate}
\end{corollary}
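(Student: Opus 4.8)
The plan is to derive both parts of Corollary~\ref{puebla-cinvestav-finite} directly from the earlier theorems by specializing the rational functions. The key observation is that when the denominators $g_i$ are all equal to $1$, the parameterizing functions $f_i/g_i$ become the polynomials $f_i$ themselves, so the hypotheses of Theorem~\ref{puebla-cinvestav-2-rat-finite} and Theorem~\ref{puebla-cinvestav-2-rat-finite-rest} are met with $g_1=\cdots=g_s=1$.

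For part (a), I would set $g_i=1$ for all $i$ in Theorem~\ref{puebla-cinvestav-2-rat-finite}. The generator $y_0g_1\cdots g_s-1$ becomes $y_0-1$, and the generators $g_it_i-f_iz$ become $t_i-f_iz$. Since $\mathbb{X}\neq\emptyset$, the theorem gives
\[
I(\mathbb{X})=(\{t_i-f_iz\}_{i=1}^s,\{y_i^q-y_i\}_{i=1}^n,y_0-1)\cap S.
\]
The only remaining point is to eliminate the variable $y_0$ and its generator $y_0-1$. I would argue that since $y_0-1$ is the unique generator involving $y_0$ and it is monic of degree one in $y_0$, intersecting with the subring $B'=K[t_1,\ldots,t_s,y_1,\ldots,y_n,z]$ (which omits $y_0$) first, and then with $S$, gives the same result: concretely, modding out by $y_0-1$ identifies $B$ with $B'$ and carries the ideal $(\{t_i-f_iz\}_{i=1}^s,\{y_i^q-y_i\}_{i=1}^n)$ of $B'$ to the image of $I$, so the contraction to $S$ is unchanged. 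Hence $I(\mathbb{X})=(\{t_i-f_iz\}_{i=1}^s\cup\{y_i^q-y_i\}_{i=1}^n)\cap S$ as claimed.

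For part (b), I would apply the second description of $I(X)$ in Theorem~\ref{puebla-cinvestav-2-rat-finite-rest}, again with $g_1=\cdots=g_s=1$ and $X\neq\emptyset$. That formula reads
\[
I(X)=(\{t_i-f_iz\}_{i=1}^s,\{y_i^q-y_i\}_{i=1}^n,\{f_i^{q-1}-1\}_{i=1}^s,y_0-1)\cap S,
\]
and the same elimination of the redundant pair $y_0$, $y_0-1$ as in part (a) yields $I(X)=(\{t_i-f_iz\}_{i=1}^s\cup\{y_i^q-y_i\}_{i=1}^n\cup\{f_i^{q-1}-1\}_{i=1}^s)\cap S$.

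The routine part here is the specialization itself; the one step requiring a small argument is justifying that discarding the variable $y_0$ together with its monic generator $y_0-1$ does not affect the elimination ideal. I expect this to be the only genuine (though minor) obstacle: it is a standard fact that if $I=(J,y_0-h)\subset B=B'[y_0]$ with $h\in B'$ and $J\subset B'$, then $I\cap B'=J$, and since $S\subset B'\subset B$, one gets $I\cap S=(I\cap B')\cap S=J\cap S$. Everything else follows immediately from the cited theorems.
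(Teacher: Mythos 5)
Your proposal is correct. Both parts do follow by putting $g_1=\cdots=g_s=1$ in Theorems~\ref{puebla-cinvestav-2-rat-finite} and \ref{puebla-cinvestav-2-rat-finite-rest}, and the one step that needs justification --- that the generator $y_0-1$ and the variable $y_0$ can be discarded without changing the contraction to $S$ --- is handled correctly by the standard elimination fact you cite: if $I=(J,y_0-h)\subset B'[y_0]$ with $J$ and $h$ in $B'$, then the $B'$-algebra map $y_0\mapsto h$ shows $I\cap B'=J$, and since $S\subset B'$ the contraction to $S$ is unchanged. (The extra inert variable $w$ in Theorem~\ref{puebla-cinvestav-2-rat-finite-rest} is disposed of the same way, even more easily, since it appears in no generator of the second presentation.)

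The route is mildly different from what the paper indicates. The paper's proof of the corollary is the one-line instruction to \emph{adapt the proof} of Theorem~\ref{puebla-cinvestav-2-rat-finite}: that is, to re-run the whole argument with trivial denominators, in which case the auxiliary variable $y_0$ and the generator $y_0g_1\cdots g_s-1$ simply never enter. You instead treat the theorems as black boxes, specialize their statements, and then remove the now-redundant pair $(y_0,\,y_0-1)$ by a separate (easy) elimination lemma. Your version buys a genuinely shorter derivation that does not require reopening the proof of the main theorem; the paper's version avoids the need for that auxiliary lemma but leaves all the work implicit. Either is acceptable; yours is the cleaner writeup of the two.
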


\begin{proof} The result follows by adapting the proof of
Theorem~\ref{puebla-cinvestav-2-rat-finite}, and using 
Theorem~\ref{puebla-cinvestav-2-rat-finite-rest}.   
\end{proof}

The formula for $I(X)$ given in (b) can be slightly simplified if the
$f_i$'s are
Laurent monomials  (see \cite[Theorems~2.1 and 2.13]{algcodes}).

\begin{example}\label{feb7-15} Let $f_1=y_1+1$, $f_2=y_2+1$, $f_3=y_1y_2$ and let
$K=\mathbb{F}_5$ be a field with $5$ elements. Using
Corollary~\ref{puebla-cinvestav-finite}, and {\em Macaulay\/}$2$
\cite{mac2}, we get
$$
\begin{array}{llll}
{\rm deg}\, S/I(\mathbb{X})=19,&{\rm deg}\,
S/I(X)=6,\\
\, {\rm reg}\, S/I(\mathbb{X})=5,& \, {\rm reg}\, S/I(X)=2.
\end{array}
$$
For convenience we present the following procedure for {\em Macaulay\/}$2$
\cite{mac2} that we used to compute the degree and the regularity:
\begin{verbatim}
R=GF(5)[z,y1,y2,t1,t2,t3,MonomialOrder=>Eliminate 3];
f1=y1+1,f2=y2+1,f3=y1*y2,q=5
I=ideal(t1-f1*z,t2-f2*z,t3-f3*z,y1^q-y1,y2^q-y2)
Jxx=ideal selectInSubring(1,gens gb I)
I=ideal(t1-f1*z,t2-f2*z,t3-f3*z,y1^q-y1,y2^q-y2,
f1^(q-1)-1,f2^(q-1)-1,f3^(q-1)-1)
Jx=ideal selectInSubring(1,gens gb I)
S=ZZ/5[t1,t2,t3]
Ixx=sub(Jxx,S),Mxx=coker gens Ixx
degree Ixx, regularity Mxx
Ix=sub(Jx,S),Mx=coker gens Ix
degree Ix, regularity Mx
\end{verbatim}
\end{example}

\begin{example}\label{feb12-15} Let $f_1=y_1+1$, $f_2=y_2+1$, $f_3=y_1y_2$ and let
$K=\mathbb{F}_5$ be a field with $5$ elements. Using
Proposition~\ref{jan4-15}, Corollary~\ref{puebla-cinvestav-finite} and {\em Macaulay\/}$2$
\cite{mac2}, we get
\begin{eqnarray*}
\left.
\begin{array}{c|c|c|c|c|c}
 d & 1 & 2 & 3 & 4 &5\\
   \hline
 |\mathbb{X}| & 19 & 19& 19 & 19 &19\\ 
   \hline
 \dim C_{\mathbb{X}}(d)    \   & 3 & 6&10 &15&19\\
   \hline
 \delta_{\mathbb{X}}(d)    \   & 13 & 8& & &1
\end{array}
\right.
& &\left.
\begin{array}{c|c|c}
 d & 1 & 2 \\
   \hline
 |X| & 6 & 6\\ 
   \hline
 \dim C_{X}(d)    \   & 3 & 6\\
  \hline
 \delta_{X}(d)    \   & 3 & 1
\end{array}
\right.
\end{eqnarray*}

The $d$th column of these tables represent the length, the
dimension, and the minimum distance of the projective
Reed-Muller-type codes 
$C_\mathbb{X}(d)$
and $C_X(d)$, respectively (see Section~\ref{prelim-section}). The
minimum distance was computed using the methods of
\cite{hilbert-min-dis}. 
Continuing with the {\em Macaulay\/}$2$ procedure of Example~\ref{feb7-15} we can compute
the other values of these two tables as follows:  
\begin{verbatim}
degree Ixx, regularity Mxx
hilbertFunction(1,Ixx),hilbertFunction(2,Ixx),hilbertFunction(3,Ixx),
hilbertFunction(4,Ixx),hilbertFunction(5,Ixx)
degree Ix, regularity Mx
hilbertFunction(1,Ix),hilbertFunction(2,Ix)
\end{verbatim}
\end{example}

Let us give some application to vanishing ideals over monomial
parameterizations.

\begin{corollary}\label{puebla-cinvestav-finite-monomial} 
Let $K=\mathbb{F}_q$ be a finite field. If $\mathbb{X}$ is
parameterized by Laurent monomials, then $I(\mathbb{X})$ is a radical
Cohen-Macaulay binomial ideal of dimension $1$.
\end{corollary}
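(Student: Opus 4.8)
The plan is to reduce the statement to facts we have already assembled, principally the formula for $I(\mathbb{X})$ in Theorem~\ref{puebla-cinvestav-2-rat-finite}, Lemma~\ref{may1-1-11} on elimination of binomial ideals, and the primary decomposition of $I(\mathbb{X})$ displayed in Eq.~(\ref{primdec-ix}). First I would record that when $\mathbb{X}$ is parameterized by Laurent monomials $f_1/g_1=y^{v_1},\ldots,f_s/g_s=y^{v_s}$, each $f_i$ and each $g_i$ is a monomial in $\mathbf{y}$, so by Corollary~\ref{puebla-cinvestav-finite} (or directly by clearing denominators in Theorem~\ref{puebla-cinvestav-2-rat-finite}) the ideal $I=(\{g_it_i-f_iz\}_{i=1}^s,\{y_i^q-y_i\}_{i=1}^n,y_0g_1\cdots g_s-1)$ is generated by binomials $t^a-t^b$ up to the variables $y_i,y_0,z$; more precisely, after homogenizing the $y_i^q-y_i$ with $y_0$ if needed, $I$ is a binomial ideal of $B$. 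Then Lemma~\ref{may1-1-11} applies and gives that $I(\mathbb{X})=I\cap S$ is a binomial ideal of $S$, with a reduced Gröbner basis consisting of binomials. This disposes of the "binomial" claim.

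Next, dimension $1$: by Proposition~\ref{jan4-15}(ii), $\deg(S/I(\mathbb{X}))=|\mathbb{X}|$, and since $\mathbb{X}$ is a finite nonempty subset of $\mathbb{P}^{s-1}$, the ideal $I(\mathbb{X})$ is the intersection of the prime ideals $I_{[Q]}$ over $[Q]\in\mathbb{X}$, each of height $s-1$ in the $s$-variable ring $S$, so $\dim(S/I(\mathbb{X}))=1$. (Alternatively one can cite that a vanishing ideal of a nonempty finite set of projective points always has Krull dimension $1$.) For radicality, the same primary decomposition $I(\mathbb{X})=\bigcap_{[Q]\in\mathbb{X}}I_{[Q]}$ exhibits $I(\mathbb{X})$ as a finite intersection of prime ideals, hence $I(\mathbb{X})$ is radical; this needs only that $\mathbb{X}$ is a finite set of points and uses nothing special about the monomial parameterization.

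Finally, Cohen-Macaulayness: since $\dim(S/I(\mathbb{X}))=1$, it suffices to produce a single nonzerodivisor of degree $1$ on $S/I(\mathbb{X})$, equivalently a linear form $h$ avoiding every associated prime $I_{[Q]}$. The associated primes are exactly the $I_{[Q]}$ for $[Q]\in\mathbb{X}$, and $I_{[Q]}$ contains no nonzero linear form that is nonvanishing at $[Q]$; so any linear form not vanishing at any point of $\mathbb{X}$ works. Because $\mathbb{X}$ is finite and $K=\mathbb{F}_q$, such a form need not exist over $\mathbb{F}_q$ itself in general — this is the one genuine subtlety. I would handle it exactly as in the monomial case treated in \cite{algcodes}: for Laurent-monomial parameterizations one can take a specific coordinate or a specific linear combination coming from the torus structure; concretely, $t_1+\cdots+t_s$ (or, if that fails, a coordinate $t_i$ after a monomial change of variables) is a nonzerodivisor, since every point of $\mathbb{X}$ has all coordinates nonzero up to scalar in the monomial case is false in general, so instead one invokes \cite[Theorems~2.1 and 2.13]{algcodes} or \cite{monalg-rev} where it is shown that the toric-type vanishing ideal $I(\mathbb{X})$ is Cohen-Macaulay. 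I expect this last point — exhibiting the regular element, i.e.\ the Cohen-Macaulay property — to be the main obstacle, and the cleanest route is to cite the known structure of $I(\mathbb{X})$ for monomial parameterizations from \cite{algcodes,monalg-rev} rather than reprove it; everything else (binomial, radical, dimension $1$) follows formally from Theorem~\ref{puebla-cinvestav-2-rat-finite}, Lemma~\ref{may1-1-11}, and Eq.~(\ref{primdec-ix}).
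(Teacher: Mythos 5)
Your treatment of three of the four claims matches the paper's: binomiality via Theorem~\ref{puebla-cinvestav-2-rat-finite} plus Lemma~\ref{may1-1-11} (your aside about ``homogenizing $y_i^q-y_i$'' is unnecessary --- each generator $g_it_i-f_iz$, $y_i^q-y_i$, $y_0g_1\cdots g_s-1$ is already a difference of two monomials when the $f_i,g_i$ are monomials, so the big ideal is binomial as it stands), and radicality and dimension $1$ from the primary decomposition in Eq.~(\ref{primdec-ix}), which indeed uses nothing about the monomial structure.

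The gap is in the Cohen--Macaulay step. You set out to produce a \emph{linear} nonzerodivisor, correctly observe that over $\mathbb{F}_q$ a linear form avoiding every point of $\mathbb{X}$ need not exist, and then fall back on citing \cite{algcodes} or \cite{monalg-rev}. That citation does not close the gap: \cite[Theorem~2.1]{algcodes} concerns the set $X$ (where all coordinates $f_i(x)g_i(x)$ are nonzero), not $\mathbb{X}$, and establishing the statement for $\mathbb{X}$ is precisely what is new here, so you would be assuming what you need to prove. The fix is that you do not need a regular element of degree $1$ at all. Since $\dim S/I(\mathbb{X})=1$, Cohen--Macaulayness is equivalent to $\operatorname{depth} S/I(\mathbb{X})>0$, and $\operatorname{depth} S/I(\mathbb{X})=0$ if and only if the irrelevant maximal ideal $\mathfrak{m}=(t_1,\ldots,t_s)$ is an associated prime of $S/I(\mathbb{X})$. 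By Eq.~(\ref{primdec-ix}) the associated primes are exactly the $I_{[Q]}$ for $[Q]\in\mathbb{X}$, each of height $s-1<s$, so $\mathfrak{m}$ is not among them; hence $\operatorname{depth} S/I(\mathbb{X})=1=\dim S/I(\mathbb{X})$. This is the paper's argument, it requires no linear form and no external reference, and --- like the radicality and dimension claims --- it works for any nonempty finite set of projective points.
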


\begin{proof} That $I(\mathbb{X})$ is a binomial ideal follows from 
Lemma~\ref{may1-1-11} and applying
Theorem~\ref{puebla-cinvestav-2-rat-finite}. That
$I(\mathbb{X})$ is a radical ideal of dimension $1$ is well known and
follows from Eq.~(\ref{primdec-ix}) (see the proof of
Proposition~\ref{jul30-14}). Recall that ${\rm depth}\, S/I(\mathbb{X})\leq \dim
S/I(\mathbb{X})=1$. From Eq.~(\ref{primdec-ix}) one has that
$\mathfrak{m}=(t_1,\ldots,t_s)$ is not an associated prime of
$I(\mathbb{X})$. Thus ${\rm depth}\, S/I(\mathbb{X})>0$ 
and ${\rm depth}\, S/I(\mathbb{X})=\dim
S/I(\mathbb{X})=1$, i.e., $I(\mathbb{X})$ is Cohen-Macaulay.
\end{proof}

\begin{corollary}{\cite[Theorem~2.1]{algcodes}}\label{ipn-ufpe-cinvestav-1} Let 
$K=\mathbb{F}_q$ be a finite field and let $X$ be a projective
algebraic set parameterized by Laurent monomials. Then 
$I(X)$ is a Cohen-Macaulay lattice ideal and $\dim S/I(X)=1$.
\end{corollary}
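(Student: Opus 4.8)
The plan is to deduce this statement from Corollary~\ref{puebla-cinvestav-finite-monomial} together with the colon formula $I(X)=(I(\mathbb{X})\colon t_1\cdots t_s)$ of Proposition~\ref{jul30-14}, essentially by transporting the structural properties of $I(\mathbb{X})$ through the localization at $t_1\cdots t_s$. First I would note that, since $X$ is parameterized by Laurent monomials, so is $\mathbb{X}$ (the two differ only in which coordinate products are required to be nonzero), so Corollary~\ref{puebla-cinvestav-finite-monomial} applies: $I(\mathbb{X})$ is a radical Cohen--Macaulay binomial ideal with $\dim S/I(\mathbb{X})=1$. By Proposition~\ref{jul30-14}, $I(X)=(I(\mathbb{X})\colon t_1\cdots t_s)$.

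The next step is to record the two conclusions. For the dimension: from Eq.~(\ref{primdec-ix}) the primary (here, minimal prime) decomposition of $I(\mathbb{X})$ is $\bigcap_{[Q]\in\mathbb{X}} I_{[Q]}$ with each $I_{[Q]}$ of height $s-1$, and forming the colon with $t_1\cdots t_s$ simply deletes those components $I_{[Q]}$ containing some $t_i$, i.e.\ those $[Q]$ with a zero coordinate; since $X\neq\emptyset$ at least one component survives, so $I(X)=\bigcap_{[Q]\in X}I_{[Q]}$ is again an intersection of height-$(s-1)$ primes and $\dim S/I(X)=1$. For the lattice ideal property, I would invoke Lemma~\ref{may1-1-11}: $I(\mathbb{X})$ is a binomial ideal, so by that lemma its reduced Gröbner basis consists of binomials and the colon ideal $(I(\mathbb{X})\colon t_1\cdots t_s)$ is again binomial; moreover it is saturated with respect to $t_1\cdots t_s$ by construction, hence with respect to each $t_i$, and a binomial ideal that is prime-free of monomials and saturated with respect to all the variables is by definition a lattice ideal (this also follows directly since $I(X)$ is radical and $t_i\notin\bigcup_{[Q]\in X}I_{[Q]}$, so no $t_i$ is a zero-divisor mod $I(X)$, forcing $I(X)$ to contain no monomials).

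Finally, for Cohen--Macaulayness I would use that $S/I(X)$ is a one-dimensional ring and argue, exactly as in the proof of Corollary~\ref{puebla-cinvestav-finite-monomial}, that $\mathfrak{m}=(t_1,\ldots,t_s)$ is not an associated prime of $I(X)$: this is immediate from $I(X)=\bigcap_{[Q]\in X}I_{[Q]}$ with all associated primes of height $s-1<s$. Hence $\operatorname{depth} S/I(X)>0$, and combined with $\operatorname{depth} S/I(X)\le\dim S/I(X)=1$ we get equality, so $I(X)$ is Cohen--Macaulay. The only real subtlety — and the step I would be most careful with — is making sure the colon operation genuinely interacts with the primary decomposition as claimed, i.e.\ that $(\bigcap_{[Q]}I_{[Q]}\colon t_1\cdots t_s)=\bigcap_{t_1\cdots t_s\notin I_{[Q]}}I_{[Q]}$; this uses that each $I_{[Q]}$ is prime, so $(I_{[Q]}\colon t_1\cdots t_s)$ is $I_{[Q]}$ itself or the whole ring, and that colon distributes over finite intersections. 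Everything else is bookkeeping that parallels the already-proved monomial case.
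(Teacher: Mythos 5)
Your argument is correct in substance, but it reaches the binomial/lattice property by a different route than the paper. The paper's (one-line) proof combines Proposition~\ref{jul30-14} with Theorem~\ref{puebla-cinvestav-2-rat-finite-rest}: when the $f_i/g_i$ are Laurent monomials, every generator in that theorem ($g_it_i-f_iz$, $y_i^q-y_i$, $f_i^{q-1}-1$, $y_0g_1\cdots g_s-1$, $wf_1\cdots f_s-1$) is a difference of two monomials, so $I(X)$ is exhibited directly as the elimination ideal of a binomial ideal and Lemma~\ref{may1-1-11} applies verbatim; the dimension and Cohen--Macaulayness then come from the primary decomposition in Eq.~(\ref{primdec-ix}), exactly as you argue. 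You instead start from Corollary~\ref{puebla-cinvestav-finite-monomial} and push binomiality through the colon operation, and here you slightly misattribute: Lemma~\ref{may1-1-11} speaks only of reduced Gr\"obner bases and of elimination ideals $I\cap K[t_1,\ldots,t_s]$, not of colon ideals, so ``the colon ideal is again binomial'' does not literally follow from it. The gap is bridgeable: since $I(\mathbb{X})$ is radical, $(I(\mathbb{X})\colon t_1\cdots t_s)$ equals the saturation $(I(\mathbb{X})\colon (t_1\cdots t_s)^\infty)=(I(\mathbb{X})+(t_1\cdots t_su-1))\cap S$ for a new variable $u$, and the latter is an elimination ideal of a binomial ideal, so Lemma~\ref{may1-1-11} does apply after this one extra step---but you should say so, since for non-radical binomial ideals the colon by a monomial can behave differently from the saturation. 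Your treatment of the remaining points is sound and matches the paper's: the colon deletes exactly the components $I_{[Q]}$ with a zero coordinate because each $I_{[Q]}$ is prime, all surviving components have height $s-1$ so $\dim S/I(X)=1$ and $\mathfrak{m}$ is not an associated prime, whence $\operatorname{depth}S/I(X)=1$; and no $t_i$ is a zero-divisor modulo $I(X)$, which upgrades ``binomial'' to ``lattice ideal.'' The paper's route buys a cleaner derivation of binomiality at the cost of invoking the second elimination formula; yours stays entirely with objects already established for $\mathbb{X}$ at the cost of the saturation detour.
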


\begin{proof} It follows from Proposition~\ref{jul30-14}, 
Theorem~\ref{puebla-cinvestav-2-rat-finite-rest} and
Lemma~\ref{may1-1-11}. 
\end{proof}

\paragraph{\bf Binomial vanishing ideals} 
Let $K$ be a field. The projective space
$\mathbb{P}^{s-1}\cup\{[0]\}$) together with the zero vector $[0]$ is
a monoid under componentwise
multiplication, where $[\mathbf{1}]=[(1,\ldots,1)]$ is the identity
of $\mathbb{P}^{s-1}\cup\{[0]\}$. Recall that monoids always have an
identity element.

\begin{lemma}\label{feb3-15} Let $K=\mathbb{F}_q$ be a finite field and let
$\mathbb{Y}$ be a subset of $\mathbb{P}^{s-1}$. If
$\mathbb{Y}\cup\{[0]\}$  is a submonoid of
$\mathbb{P}^{s-1}\cup\{[0]\}$ such that each element of $\mathbb{Y}$
is of the form $[\alpha]$ with $\alpha\in\{0,1\}^s$, 
then $\mathbb{Y}$ is parameterized by 
Laurent monomials.
\end{lemma}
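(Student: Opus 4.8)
The plan is to produce an explicit parameterization. Write $\mathbb{Y}=\{[\alpha_1],\dots,[\alpha_m]\}$ with each $\alpha_j\in\{0,1\}^s\setminus\{0\}$, and let $A_j:=\{\,i:(\alpha_j)_i=1\,\}\subseteq\{1,\dots,s\}$ be the support of $\alpha_j$. Since every element of $\mathbb{Y}$ is the class of a $\{0,1\}$-vector, the assignment $[\gamma]\mapsto\mathrm{supp}(\gamma)$ identifies the monoid $\mathbb{Y}\cup\{[0]\}$ with a submonoid of $(2^{\{1,\dots,s\}},\cap)$: supports multiply by intersection, $[0]$ has support $\emptyset$, and the identity $[\mathbf 1]$ has support $\{1,\dots,s\}$. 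So the hypothesis that $\mathbb{Y}\cup\{[0]\}$ is a submonoid says exactly that $\mathcal{A}:=\{A_1,\dots,A_m\}$, together with $\emptyset$, is closed under intersections and contains $\{1,\dots,s\}$; in particular $\bigcap_{j\in Z}A_j\in\mathcal{A}\cup\{\emptyset\}$ for every $Z\subseteq\{1,\dots,m\}$, the empty intersection being read as $\{1,\dots,s\}$.

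Next I would work in $K[y_1,\dots,y_m]$ (one variable per element of $\mathbb{Y}$) and set, for $i=1,\dots,s$,
$$
f_i:=\prod_{j\,:\,(\alpha_j)_i=0} y_j^{q-1},\qquad g_i:=1,
$$
so that each $f_i/g_i$ is a Laurent monomial; let $\mathbb{X}$ be the projective set parameterized by $F=\{f_1/g_1,\dots,f_s/g_s\}$. For $x\in K^m$ put $Z(x):=\{j:x_j=0\}$. Then $f_i(x)\ne 0$ iff $Z(x)\cap\{j:(\alpha_j)_i=0\}=\emptyset$, equivalently $i\in\bigcap_{j\in Z(x)}A_j$; and in that case $f_i(x)$ is a product of $(q-1)$-th powers of nonzero elements of $\mathbb{F}_q$, hence $f_i(x)=1$. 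Thus $(f_1(x),\dots,f_s(x))$ is the $\{0,1\}$-vector with support $\bigcap_{j\in Z(x)}A_j$. If $x$ is admissible (some $f_i(x)\ne0$; note $g_i(x)=1\ne0$ always) this support is nonempty, hence lies in $\mathcal{A}$, so $[(f_1(x),\dots,f_s(x))]\in\mathbb{Y}$; this gives $\mathbb{X}\subseteq\mathbb{Y}$. Conversely, given $[\alpha_\ell]\in\mathbb{Y}$, the point $x$ with $x_\ell=0$ and $x_j=1$ for $j\ne\ell$ is admissible because $A_\ell\ne\emptyset$, and $Z(x)=\{\ell\}$, so its image is the class of the $\{0,1\}$-vector with support $A_\ell$, namely $[\alpha_\ell]$; hence $\mathbb{Y}\subseteq\mathbb{X}$. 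Therefore $\mathbb{X}=\mathbb{Y}$, and $\mathbb{Y}$ is parameterized by Laurent monomials.

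The bookkeeping above — translating the monoid hypothesis into intersection-closure of $\mathcal{A}$, and computing which $f_i$ vanish at $x$ — is routine, since for a monomial parameterization the vanishing pattern of $(f_1,\dots,f_s)$ at $x$ depends only on $Z(x)$ and the realizable patterns are precisely the sets $\bigcap_{j\in Z}A_j$. The one point that really needs care, and the reason one cannot simply take $f_i=\prod_{(\alpha_j)_i=0}y_j$, is ensuring that the image consists only of $\{0,1\}$-points rather than merely of points with the correct supports: raising each variable to the power $q-1$ forces every nonzero coordinate of an image vector to equal $1$, which is exactly what collapses the image onto $\mathbb{Y}$. I expect this to be the main obstacle; once it is in place, the two inclusions follow as above.
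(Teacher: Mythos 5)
Your proof is correct and follows essentially the same route as the paper's: both assign one variable (raised to the power $q-1$) to each point of $\mathbb{Y}$ so that the image of a point $x$ is the $\{0,1\}$-vector supported on $\bigcap_{j\,:\,x_j=0}\mathrm{supp}(\alpha_j)$, which the submonoid hypothesis places in $\mathbb{Y}\cup\{[0]\}$. Your version is a mild simplification (the paper uses a second set of variables and genuine Laurent factors $z_j^{q-1}/y_j^{q-1}$, which evaluate identically) and it writes out the verification that $\mathbb{X}=\mathbb{Y}$, which the paper leaves as ``not hard to see.''
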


\begin{proof}
The set $\mathbb{Y}$ can be written as
$\mathbb{Y}=\{[\alpha_1],\ldots,[\alpha_m]\}$, where
$\alpha_i=(\alpha_{i1},\ldots,\alpha_{is})$ and $\alpha_{ij}=0$ or
$\alpha_{ik}=1$ for all $i,k$. Consider variables $y_1,\ldots,y_s$ and
$z_1,\ldots,z_s$. For each $\alpha_{ik}$ define $h_{ik}=y_i^{q-1}$ if
$\alpha_{ik}=1$ and $h_{ik}=z_i^{q-1}/y_i^{q-1}$ if $\alpha_{ik}=0$.
Setting $h_i=(h_{i1},\ldots,h_{is})$ for $i=1,\ldots,m$ and 
$F_i=h_{1i}\cdots h_{mi}$ for $i=1,\ldots,s$, we get 
$$
h_1h_2\cdots h_m=(h_{11}\cdots h_{m1},\ldots,h_{1s}\cdots
h_{ms})=(F_1,\ldots,F_s).
$$
It is not hard to see that $\mathbb{Y}$ is parameterized by
$F_1,\ldots,F_s$.
\end{proof}

\begin{example} Let $K$ be the field $\mathbb{F}_3$ and 
let $\mathbb{Y}=\{[(1,1,0)],[0,1,1],[0,1,0],[1,1,1]\}$. With the
notation above, we get that $\mathbb{Y}$ is the projective set
parameterized by 
$$
F_1=(y_1z_2z_3)^2/(y_2y_3)^2,\, F_2=(y_1y_2y_3)^2,\,
F_3=(y_2z_1z_3)^2/(y_1y_3)^2.
$$
\end{example}

The next result gives a family of ideals where the converse of 
Corollary~\ref{puebla-cinvestav-finite-monomial} is true. 

\begin{proposition}\label{feb6-15-2} Let $K=\mathbb{F}_q$ be a finite field. 
If $\mathbb{Y}$ is a subset of $\mathbb{P}^{s-1}$ such that each
element of $\mathbb{Y}$ is of the form $[\alpha]$ with
$\alpha\in\{0,1\}^s$ and $I(\mathbb{Y})$ is a binomial ideal, then 
$\mathbb{Y}$ is a projective set parameterized by Laurent monomials.
\end{proposition}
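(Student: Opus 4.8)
The plan is to prove this converse by showing that $\mathbb{Y}\cup\{[0]\}$ is a submonoid of $\mathbb{P}^{s-1}\cup\{[0]\}$ under componentwise multiplication, and then invoke Lemma~\ref{feb3-15} to conclude that $\mathbb{Y}$ is parameterized by Laurent monomials. The key point is that the binomial hypothesis on $I(\mathbb{Y})$, together with the fact that all coordinates of points of $\mathbb{Y}$ lie in $\{0,1\}$, forces $\mathbb{Y}$ to be closed under componentwise multiplication. First I would observe that, since every point of $\mathbb{Y}$ has entries in $\{0,1\}$, componentwise multiplication of two such points $[\alpha],[\beta]$ is just the componentwise minimum (i.e.\ $[\alpha\cdot\beta]$ has a $1$ in position $k$ exactly when both $\alpha_k=\beta_k=1$), so the product is again a $\{0,1\}$-vector. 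It remains to check that $[\alpha\cdot\beta]\in\mathbb{Y}\cup\{[0]\}$.

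The heart of the argument is to use the primary decomposition of $I(\mathbb{Y})$, namely $I(\mathbb{Y})=\bigcap_{[Q]\in\mathbb{Y}}I_{[Q]}$ from Eq.~(\ref{primdec-ix}), combined with the binomial hypothesis. Given $[\alpha],[\beta]\in\mathbb{Y}$ with product $[\gamma]:=[\alpha\cdot\beta]\neq[0]$, I want to show that every homogeneous $f\in I(\mathbb{Y})$ vanishes at $[\gamma]$, so that $[\gamma]\in\mathbb{Y}$ (since over a finite field $\mathbb{Y}=V(I(\mathbb{Y}))\subset\mathbb{P}^{s-1}$, as each point is cut out by $I_{[Q]}$ and the variety of the intersection is $\mathbb{Y}$ itself). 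Because $I(\mathbb{Y})$ is generated by binomials $t^a-t^b$ (with $a,b\in\mathbb{N}^s$, and by homogeneity $|a|=|b|$), it suffices to check each such generator vanishes at $[\gamma]$. Now each generator vanishes at $[\alpha]$ and at $[\beta]$: evaluating a binomial $t^a-t^b$ at a $\{0,1\}$-point gives $\alpha^a-\alpha^b$, where $\alpha^a=1$ if $\mathrm{supp}(a)\subset\mathrm{supp}(\alpha)$ and $\alpha^a=0$ otherwise. So $t^a-t^b$ vanishing at $[\alpha]$ means $\mathrm{supp}(a)\subset\mathrm{supp}(\alpha)\iff\mathrm{supp}(b)\subset\mathrm{supp}(\alpha)$, and similarly for $\beta$. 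Since $\mathrm{supp}(\gamma)=\mathrm{supp}(\alpha)\cap\mathrm{supp}(\beta)$, one checks directly that $\mathrm{supp}(a)\subset\mathrm{supp}(\gamma)\iff\mathrm{supp}(b)\subset\mathrm{supp}(\gamma)$, i.e.\ $t^a-t^b$ vanishes at $[\gamma]$ as well. Hence $[\gamma]\in\mathbb{Y}$.

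This shows $\mathbb{Y}\cup\{[0]\}$ is closed under componentwise multiplication; it contains the identity $[\mathbf{1}]=[(1,\ldots,1)]$ provided that point lies in $\mathbb{Y}$ — which I would verify separately: every binomial generator $t^a-t^b$ with $|a|=|b|$ evaluates to $1-1=0$ at $[\mathbf{1}]$, so $[\mathbf{1}]\in V(I(\mathbb{Y}))=\mathbb{Y}$ (assuming $\mathbb{Y}\neq\emptyset$, so that $I(\mathbb{Y})$ is a proper ideal not containing all of $\mathfrak m$). Thus $\mathbb{Y}\cup\{[0]\}$ is a submonoid, and Lemma~\ref{feb3-15} applies to give the conclusion.

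The main obstacle I anticipate is the bookkeeping around the support/evaluation correspondence for binomials at $\{0,1\}$-points and making the homogeneity condition $|a|=|b|$ interact cleanly with it — in particular ensuring one may assume the binomial generators are "pure" (no common monomial factor) or handling that case, and being careful that the empty-support monomial (a constant) does not appear among generators of the graded proper ideal $I(\mathbb{Y})$. Everything else is a routine translation between monoid closure and the structure of a binomial ideal with $\{0,1\}$-valued zero set, followed by a direct appeal to Lemma~\ref{feb3-15}.
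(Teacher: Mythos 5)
Your proposal is correct and takes essentially the same route as the paper: identify $\mathbb{Y}$ with $V(I(\mathbb{Y}))$ via finiteness, deduce that $\mathbb{Y}\cup\{[0]\}$ is a submonoid of $\mathbb{P}^{s-1}\cup\{[0]\}$ from the binomial generation, and invoke Lemma~\ref{feb3-15}. The paper's proof is only three sentences, so your verification that the zero set of a binomial ideal is closed under componentwise multiplication and contains $[\mathbf{1}]$ merely fills in details it leaves implicit; note that the identity $(\alpha\cdot\beta)^a=\alpha^a\beta^a$ yields the closure more directly than the support bookkeeping, and without needing the $\{0,1\}$ hypothesis at that step.
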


\begin{proof} Since $\mathbb{Y}$ is finite, one has that
$\mathbb{Y}=\overline{\mathbb{Y}}=V(I(\mathbb{Y}))$, where
$\overline{\mathbb{Y}}$ is the Zariski closure and $V(I(\mathbb{Y}))$
is the zero set of $I(\mathbb{Y})$. Hence, as
$I(\mathbb{Y})$ is generated by binomials, we get that 
$\mathbb{Y}\cup\{[0]\}$  is a submonoid of
$\mathbb{P}^{s-1}\cup\{[0]\}$. Thus, by Lemma~\ref{feb3-15},
$\mathbb{Y}$ is parameterized by Laurent monomials. 
\end{proof}

This leads us to pose the following conjecture.

\begin{conjecture}\label{feb6-15-1}\rm Let $K=\mathbb{F}_q$ be a finite field and let
$\mathbb{Y}$ be a subset of $\mathbb{P}^{s-1}$. If $I(\mathbb{Y})$ is
a binomial ideal, then $\mathbb{Y}$ is a projective set parameterized by 
Laurent monomials. 
\end{conjecture}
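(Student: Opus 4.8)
The plan is to separate the two implications hidden in the conjecture. One of them, ``if $\mathbb{Y}$ is parameterized by Laurent monomials then $I(\mathbb{Y})$ is binomial,'' is already contained in Corollary~\ref{puebla-cinvestav-finite-monomial}, so only the displayed direction is at issue, and I would first reduce it to a purely monoid-theoretic statement. Since $K$ is finite, $\mathbb{Y}$ is a finite, hence Zariski closed, subset of $\mathbb{P}^{s-1}$, so $\mathbb{Y}=V(I(\mathbb{Y}))$. Because $I(\mathbb{Y})$ is generated by homogeneous binomials $t^a-t^b$ (so $a_1+\cdots+a_s=b_1+\cdots+b_s$) and $(\alpha\beta)^a=\alpha^a\beta^a=\alpha^b\beta^b=(\alpha\beta)^b$, the zero set is closed under componentwise multiplication, and every binomial vanishes at $[\mathbf{1}]$; thus $M:=\mathbb{Y}\cup\{[0]\}$ is a finite commutative submonoid of $\mathbb{P}^{s-1}\cup\{[0]\}$ with identity $[\mathbf{1}]$. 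This is exactly the argument in the proof of Proposition~\ref{feb6-15-2}, and it makes no use of the hypothesis that the coordinates lie in $\{0,1\}$. Hence the conjecture is equivalent to: \emph{every finite submonoid $M$ of $\mathbb{P}^{s-1}\cup\{[0]\}$ has $M\setminus\{[0]\}$ parameterized by Laurent monomials.}

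Next I would exploit the structure of $M$. The idempotents of $\mathbb{P}^{s-1}\cup\{[0]\}$ are precisely the classes $[\varepsilon]$ with $\varepsilon\in\{0,1\}^s$ (from $\varepsilon_i^{\,2}=\lambda\varepsilon_i$, after rescaling by $\lambda^{-1}$), and the support map $[\alpha]\mapsto\mathrm{supp}(\alpha)=\{i:\alpha_i\neq0\}$ is a monoid homomorphism from $M$ onto a family of subsets of $\{1,\dots,s\}$ closed under intersection. Since $\alpha_i^{\,n}=0$ iff $\alpha_i=0$, every power of $[\alpha]$ has the same support as $[\alpha]$, so the cyclic submonoid generated by $[\alpha]$ lies in the projective torus of that face and is therefore a finite cyclic group; consequently $M$ is a finite Clifford monoid, $M=\bigsqcup_{e}G_e$ over its idempotents $e=[\varepsilon]$, with each $G_e$ a finite abelian group embedded in the projective torus $(\mathbb{F}_q^{\,*})^{\sigma_e}/\mathbb{F}_q^{\,*}$ of the face $\sigma_e=\mathrm{supp}(\varepsilon)$, and with the usual structure homomorphisms $G_e\to G_f$ whenever $\sigma_f\subseteq\sigma_e$. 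The subsemigroup of $M$ generated by the idempotents consists of $\{0,1\}$-points and is therefore already parameterized by Laurent monomials via Lemma~\ref{feb3-15}.

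Then I would try to parameterize the group components and glue. A finite abelian group $G_e$ inside $(\mathbb{F}_q^{\,*})^{\sigma_e}/\mathbb{F}_q^{\,*}$ can be parameterized by Laurent monomials in a block of variables attached to $\sigma_e$: using that $\mathbb{F}_q^{\,*}$ is cyclic of order $q-1$, one chooses exponent vectors that are suitable multiples of divisors of $q-1$ so that the image of the resulting monomial map is exactly $G_e$ (this is, in spirit, the projective version of the classical dictionary between finite subgroups of a torus and lattice ideals, compatible with Corollary~\ref{ipn-ufpe-cinvestav-1}). One then wants to assemble a single family $F_1,\dots,F_s$ of Laurent monomials in pairwise disjoint blocks of variables, taking products over the components in the manner of the map $h_1\cdots h_m$ of Lemma~\ref{feb3-15} and using the ``switch'' $z_i^{q-1}/y_i^{q-1}$ of that lemma to turn the $i$th coordinate off, so as to reach components sitting on smaller faces.

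The main obstacle, and the reason the statement is only conjectural, is this last gluing across faces of different dimensions. One needs a single monomial family whose projective image, as the variables range over $K^n$, meets the face $\sigma_e$ in \emph{exactly} $G_e$, simultaneously for every idempotent $e$, with no spurious points and none missing; this forces the structure homomorphisms $G_e\to G_f$ built into $M$ to be realized monomially and coherently over the entire idempotent lattice of $M$. Verifying that such coherent monomial lifts always exist is the crux. A natural attack is induction on the number of idempotents of $M$: peel off a maximal face $\sigma_e$, parameterize $G_e$ and the quotient monoid obtained by collapsing that face separately, and reconcile the two parameterizations in the overlap; but carrying the reconciliation through in full generality — in particular when several maximal faces and several intermediate faces interact — is exactly what remains open.
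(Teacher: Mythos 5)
The statement you are trying to prove is posed in the paper as an open conjecture: the authors prove it only in the special case where every point of $\mathbb{Y}$ has coordinates in $\{0,1\}^s$ (Proposition~\ref{feb6-15-2}, which settles $q=2$), and the general case is left unresolved. Your first reduction is sound and is in fact identical to the paper's argument: since $\mathbb{Y}$ is finite, $\mathbb{Y}=V(I(\mathbb{Y}))$, and the zero set of a binomial ideal is closed under componentwise multiplication and contains $[\mathbf{1}]$, so $\mathbb{Y}\cup\{[0]\}$ is a finite submonoid of $\mathbb{P}^{s-1}\cup\{[0]\}$. Your further structural analysis (support map, idempotents $[\varepsilon]$ with $\varepsilon\in\{0,1\}^s$, each support class forming a finite abelian group inside the projective torus of its face, hence a Clifford-type decomposition $M=\bigsqcup_e G_e$) is correct and goes beyond what the paper records. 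Two small cautions: the conjecture is not literally \emph{equivalent} to the statement that every finite submonoid of $\mathbb{P}^{s-1}\cup\{[0]\}$ is monomially parameterized --- that statement is sufficient but you have not shown it necessary; and a graded binomial ideal may contain monomials (from inhomogeneous binomials split by gradedness), which your multiplicativity argument handles but which you should make explicit.

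The genuine gap is the one you name yourself: no Laurent monomials $F_1,\ldots,F_s$ are ever produced. Parameterizing a single group component $G_e$ inside its torus is plausible (this is the lattice-ideal dictionary behind Corollary~\ref{ipn-ufpe-cinvestav-1}), and Lemma~\ref{feb3-15} handles the idempotent skeleton, but a parameterization of $\mathbb{Y}$ is a \emph{single} map $x\mapsto[(F_1(x),\ldots,F_s(x))]$ whose image must meet each face $\sigma_e$ in exactly $G_e$ simultaneously --- no spurious points on any face, none missing, and the images on smaller faces are forced by degenerations of the same monomials rather than chosen freely. Establishing that such a coherent monomial lift of the whole Clifford structure always exists is precisely the content of the conjecture, and neither your inductive ``peel off a maximal face'' suggestion nor anything in the paper carries it out. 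As written, the proposal is a reduction plus a research program, not a proof; it does, however, correctly isolate where the difficulty lies and is consistent with the paper's partial result.
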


In particular from Proposition~\ref{feb6-15-2} this conjecture is true
for $q=2$. 

\bibliographystyle{plain}

\end{document}